\newcommand{\Po}{\mathcal{P}}
\newcommand{\inv}{^{-1}}
\newcommand{\sset}{\subseteq}
\newcommand{\rset}{\supseteq}
\newcommand{\me}{\wedge}
\newcommand{\bigme}{\bigwedge}
\newcommand{\jo}{\vee}
\newcommand{\bigjo}{\bigvee}
\newcommand{\reef}[1]{\reflectbox{$\mathsf{#1}$}}
\newcommand{\ple}[1]{\leq_{\mathsf{#1}}}
\newcommand{\nle}[1]{\leq\reflectbox{$\, _{\mathsf{#1}} \!\!$}}
\newcommand{\upset}{\mathord{\uparrow}}
\newcommand{\dnset}{\mathord{\downarrow}}
\newcommand{\pneg}[1]{\neg_{\mathsf{#1}}}
\newcommand{\nneg}[1]{\neg\reflectbox{$_{\mathsf{#1}}$}}
\newcommand{\pchi}[1]{\chi_{\mathsf{#1}}}
\newcommand{\nchi}[1]{\chi\reflectbox{$_{\mathsf{#1}}$}}
\newcommand{\dle}[1]{\leq\!\!\reflectbox{$_{\mathsf{#1}}$} \! _{\mathsf{#1}}}
\renewcommand{\phi}{\varphi}
\renewcommand{\r}[1]{\mathsf{#1}}
\newcommand{\resim}{\mathord{\sim}}
\DeclareFontFamily{U}{mathx}{}
\DeclareFontShape{U}{mathx}{m}{n}{<-> mathx10}{}
\DeclareSymbolFont{mathx}{U}{mathx}{m}{n}
\DeclareMathAccent{\widehat}{0}{mathx}{"70}
\DeclareMathAccent{\widecheck}{0}{mathx}{"71}
\begin{document}

\begin{frontmatter}
  \title{Goldblatt-Thomason Theorems for Fundamental (Modal) Logic}
  \author{Guillaume Massas}\footnote{guillaume.massas@sns.it}
  \address{Scuola Normale Superiore, Pisa}

  \begin{abstract}
  Holliday recently introduced a non-classical logic called Fundamental Logic, which intends to capture exactly those properties of the connectives ``and", ``or" and ``not" that hold in virtue of their introduction and elimination rules in Fitch's natural deduction system for propositional logic. Holliday provides an intuitive relational semantics for fundamental logic which generalizes both Goldblatt's semantics for orthologic and Kripke semantics for intuitionistic logic. In this paper, we further the analysis of this semantics by providing a Goldblatt-Thomason theorem for Fundamental Logic. We identify necessary and sufficient conditions on a class K of fundamental frames for it to be axiomatic, i.e., to be the class of frames satisfying some logic extending Fundamental Logic. As a straightforward application of our main result, we also obtain a Goldblatt-Thomason theorem for Fundamental Modal Logic, which extends Fundamental Logic with standard $\Box$ and $\Diamond$ operators.
  \end{abstract}

  \begin{keyword}
Non-classical Logic, Relational Semantics, Goldblatt-Thomason Theorem
  \end{keyword}
 \end{frontmatter}

 \section{Introduction}
The celebrated Goldblatt-Thomason theorem \cite{goldblatt1975axiomatic} for modal logic characterizes when an elementary class $\mathfrak{K}$ of Kripke frames is \textit{axiomatic} (i.e., \textit{modally definable}) in terms of natural closure properties imposed on $\mathfrak{K}$. From a categorical perspective, Goldblatt and Thomason's result relies on transferring results and techniques from universal algebra (particularly Birkhoff's HSP theorem \cite{birkhoff1935structure}) into the setting of relational semantics. This can typically be achieved by \textit{canonical extensions} \cite{Dunn,jonsson1951boolean,jonsson1952boolean} which, in the context of Kripke semantics for modal logic, are closely related to ultrafilter extensions. Because the mathematical core of the Goldblatt-Thomason theorem is algebraic in nature, the result straightforwardly generalizes to many relational semantics for non-classical logics, including Kripke semantics for (modal) intuitionistic logic \cite{degroot2022goldblattthomason,goldblatt2005axiomatic,rodenburg2016intuitionistic}, polarity-based semantics for lattice-expansion logics \cite{conradie2018goldblattthomason}, or the more general setting of coalgebras \cite{kurz2007goldblatt}.\\

Our goal here is to present such a generalization of the Goldblatt-Thomason theorem to Holliday's relational semantics for Fundamental Logic \cite{holliday2022fundamental}. Holliday's semantics generalizes both Goldblatt's semantics for orthologic \cite{goldblatt1974semantic} and Kripke semantics for intuitionistic logic \cite{Kripke}, while arguably preserving the intuitiveness and simplicity of both. Accordingly, this makes it a promising framework for the study of a large class of non-classical logics, towards which our result can be seen as a first step. We proceed as follows. In Section \ref{section2}, we review some background on Fundamental Logic, its algebraic semantics in terms of fundamental lattices and its relational semantics in terms of fundamental frames. We also introduce the relevant notion of morphism between fundamental frames. In Section \ref{section3} we identify the relational duals of subalgebras, homomorphic images and products of fundamental lattices. Section \ref{section4} introduces a particular kind of construction on fundamental frames called the filter extension, which is used to prove our main result, a Goldblatt-Thomason theorem for Fundamental Logic. As a corollary, we also obtain a characterization of those classes of fundamental frames that are axiomatized by a canonical logic extending Fundamental Logic. Finally, Section \ref{section5} generalizes our results to the setting of Fundamental Modal Logic \cite{holliday2024modal}.

 \section{Background} \label{section2}
In this section, we first provide some background on Fundamental Logic and on two semantics for it presented in \cite{holliday2022fundamental}, an algebraic semantics in terms of fundamental lattices and a relational semantics in terms of fundamental frames. We then define a notion of morphism between fundamental frames that is the relational analogue of morphisms between fundamental lattices. We assume some familiarity with basic notions of lattice theory and algebraic logic.

 \subsection{Fundamental Logic}

 Introduced by Holliday in \cite{holliday2022fundamental}, Fundamental Logic is the logic of a propositional language containing connectives $\me$, $\jo$, $\neg$ and constants $\bot$ and $\top$ determined uniquely by the introduction and elimination rules for each connective in Fitch's natural deduction system \cite{fitch1973natural}. Equivalently, letting $\mathcal{L}$ be such a propositional language, $\vdash_{FL}$ is the smallest reflexive and transitive relation on $\mathcal{L} \times \mathcal{L}$ satisfying the following closure conditions:
\vspace{-0.5em}
\begin{align*}
    \bot \vdash \phi &; & \phi \vdash \top &; &\top &\vdash \neg \bot; \\
    \phi \me \psi \vdash \phi&;&\phi \me \psi \vdash \psi &;&\chi \vdash \phi \, \& \, \chi \vdash \psi &\Rightarrow \chi \vdash \phi \me \psi;\\
    \phi \vdash \phi \jo \psi&;&\psi \vdash \phi \jo \psi &;&\phi \vdash \chi \, \& \, \psi \vdash \chi &\Rightarrow \phi \jo \psi \vdash \chi;\\
    \phi \me \neg \phi \vdash \bot &;& \phi \vdash \neg \neg \phi &;& \phi \vdash \psi &\Rightarrow \neg \psi \vdash \neg \phi.
\end{align*}

Fundamental Logic generalizes both the $\to$-free fragment of intuitionistic logic and orthologic. In Fitch's natural deduction system, the former can be recovered from Fundamental Logic by adding the Reiteration rule, and the latter by adding the double negation elimination rule.

Fundamental Logic has a natural algebraic semantics in terms of fundamental lattices. Recall first the following definition.

\begin{definition}
    An antitone map $f: L \to L$ on a lattice $L$ is \textit{dually self-adjoint} if for any $a,b \in L$, $a \leq_L f(b)$ iff $b \leq_L f(a)$.
\end{definition}

\begin{definition}
    A \textit{fundamental lattice} is a pair $(L,\neg)$ such that $L$ is a bounded lattice and $\neg: L \to L$ is a dually self-adjoint map such that $a \me_L \neg a = 0_L$ for any $a \in L$.
\end{definition}

Given a fundamental lattice $(L,\neg)$, a valuation $V$ maps any propositional letter $p \in \mathcal{L}$ to some $a \in L$, and is then recursively extended to any formula in $\mathcal{L}$ by using the operations on $(L, \neg)$ in the obvious way. For any two formulas $\phi,\psi \in \mathcal{L}$, $\psi$ is an algebraic consequence of $\phi$ iff $V(\phi) \leq_L V(\psi)$ for any fundamental lattice $(L, \neg)$ and any valuation $V$ on $L$. As shown in Holliday \cite{holliday2022fundamental}, Fundamental Logic is sound and complete with respect to the algebraic semantics thus obtained. 

 \subsection{Fundamental Frames}

     By a relational frame, we simply mean a pair $(X,\r R)$ such that $\r R$ is a co-serial relation on $X$, meaning that for any $x \in X$ there is $x' \in X$ such that $x' \r R x$. Points in a relational frame can be viewed as partial states of information, situations, or positions in discourse, at which propositions may be either \textit{accepted} or \textit{rejected}. The relation $\r R$ can be interpreted as a relation of \textit{openness} between such states, where $x \r R y$ is interpreted as ``$x$ is open to $y$''. In what follows, we write $\reef {\r R}$ for the converse of the relation $\r R$.

     Given a relation frame $(X,\r R)$, the relation $\r R$ induces two antitone operations $\pneg R, \nneg R : \Po(X) \to \Po(X)$ given by $\pneg R A = \{x \in X \mid \forall x' \r R x: x' \notin A\}$ and $\nneg R A = \{x \in X \mid \forall x' \reef  R x: x' \notin A\}$ for any $A \sset X$. Clearly, these two maps form a contravariant adjunction, so they induce two anti-isomorphic complete lattices $\pchi R(X)$ and $\nchi R(X)$ with domains $\{\pneg R A \mid A \sset X\}$ and $\{ \nneg R A \mid A \sset X\}$ respectively, which are the fixpoints of the operations $\pneg R \nneg R$ and $\nneg R \pneg R$ respectively. The composition of the two maps $\pneg R$ and $\nneg R$ yields a closure operator $C_{\r R}$, which can be explicitly described as \[C_{\r R}(A) = \{x \in X \mid \forall y \r R x \exists z \reef{R} y: z \in A\}\] for any $A \sset X$. Finally, we call $\pchi R(X)$ the \textit{positive algebra} of the frame $(X,\r R)$, and $\nchi R$ its \textit{negative algebra}. 

The following relations can always be defined from an openness relation $\r R$, and will be repeatedly used throughout.

\begin{definition}
        Let $(X,\r R)$ be a relational frame. For any two elements $x,x' \in X$, $x$ \textit{positively refines} $x'$ (noted $x \ple R x'$) if for any $z \in X$: $z\r Rx \Rightarrow z\r Rx'$, and $x$ \textit{negatively refines} $x'$ (noted $x \nle R x'$) if for any $z \in X$: $x \r Rz \Rightarrow x' \r Rz$.
\end{definition}

We will also often appeal to some straightforward facts about relational frames whose proofs we omit.

\begin{fact} \label{facts}
    The following hold for any relational frame $(X,R)$:
    \begin{enumerate}
        \item For any $A \sset X$, $A \in \pchi R(X)$ iff $\forall x \in X:$ $x \in A \Leftrightarrow \forall x' \r R x \exists y \reef R x': y \in A$;
        \item For any $A \sset X$, $A \in \nchi R(X)$ iff $\forall x \in X$: $x \in A \Leftrightarrow \forall x' \reef R x \exists y \r R x': y \in A$;
        \item For any $x \in X$, $\dnset_{\r R}(x) = \{y \in X \mid y \ple R x\}$ and $\overline{\r R}(x) = \{y \in X \mid \neg x \r R y\}$ are elements in $\pchi R(X)$;
        \item For any $x \in X$, $\dnset\reef {_R}(x) = \{y \in X \mid y \nle R x\}$ and $\overline{\reef R}(x) = \{y \in X \mid \neg x \reef R y\}$ are elements in $\nchi R(X)$;
        \item $\ple R$ and $\nle R$ are preorders on X;
        \item For any $x, x' \in X$, $x \ple R x'$ iff for any $A \in \pchi R(X)$, $x' \in A$ implies $x \in A$;
        \item For any $x, x' \in X$, $x \nle R x'$ iff for any $B \in \nchi R(X)$, $x' \in B$ implies $x \in B$.

    \end{enumerate}
\end{fact}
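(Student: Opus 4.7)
The plan is to verify each clause directly from the definitions of $\pneg R$, $\nneg R$, and the closure operator $C_{\r R}$, and to leverage clauses 1 and 2 as the main engine driving the rest. For clause 1, I would compute $\pneg R\nneg R A$ explicitly: by unfolding, $y \in \nneg R A$ iff $\forall z: y \r R z \Rightarrow z \notin A$, so $y \notin \nneg R A$ iff $\exists z \reef R y$ with $z \in A$; substituting into $\pneg R(\nneg R A) = \{x \mid \forall y \r R x: y \notin \nneg R A\}$ yields precisely $C_{\r R}(A)$. Since $\pchi R(X)$ is by definition the set of fixpoints of $\pneg R\nneg R$, $A \in \pchi R(X)$ iff $A = C_{\r R}(A)$, which is the stated biconditional. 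Clause 2 is handled symmetrically by swapping $\r R$ and $\reef R$.

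For clauses 3 and 4, I would treat the two kinds of principal sets separately. The set $\overline{\r R}(x)$ equals $\pneg R\{x\}$ by inspection, hence lies in the image of $\pneg R$, and so is a fixpoint of $\pneg R\nneg R$ by the standard Galois identity $\pneg R\nneg R\pneg R = \pneg R$ (dually for $\overline{\reef R}(x) = \nneg R\{x\} \in \nchi R(X)$). For $\dnset_{\r R}(x)$, I would directly verify the characterization from clause 1: if $y \ple R x$ and $w \r R y$ then $w \r R x$, so taking $v := x$ gives $v \reef R w$ and $v \ple R x$; conversely, if for every $w \r R y$ there is some $v \reef R w$ with $v \ple R x$, then since $w \r R v$ and $v \ple R x$ imply $w \r R x$, we conclude $y \ple R x$. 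Clause 4 is dual.

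Clause 5 is an immediate check: $x \ple R x$ is vacuous, and transitivity follows by chaining the implication $z \r R {-} \Rightarrow z \r R {-}$. For the left-to-right direction of clause 6, assume $x \ple R x'$ and $x' \in A \in \pchi R(X)$; given $w \r R x$, monotonicity along $\ple R$ gives $w \r R x'$, and applying clause 1 at $x'$ produces a witness $v \reef R w$ with $v \in A$, so clause 1 at $x$ yields $x \in A$. The converse instantiates the hypothesis at $A = \dnset_{\r R}(x')$, which by clause 3 belongs to $\pchi R(X)$ and contains $x'$, forcing $x \ple R x'$. Clause 7 is dual. None of these steps presents a real obstacle; the only pitfall is bookkeeping, namely keeping straight which of $\r R$ and $\reef R$ (and correspondingly $\pchi R$ and $\nchi R$) is active at each quantifier alternation.
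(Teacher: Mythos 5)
Your proof is correct: the paper explicitly omits the proofs of these facts as routine, and your verification (identifying $\pchi R(X)$ with the fixpoints of $C_{\r R}=\pneg R\nneg R$, checking the principal sets against that characterization or as images under $\pneg R$, $\nneg R$, and then deriving (v)--(vii)) is exactly the intended unfolding of the definitions. No gaps.
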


For any relational frame $(X,\r R)$, its positive algebra $\pchi R (X)$ can be equipped with the unary antitone map $\pneg R$. In order to ensure that the resulting pair $(\pchi R(X), \pneg R)$ is a fundamental lattice, we need to impose two conditions on $\r R$:

\begin{definition}
Let $(X,\r R)$ be a relational frame. Let $\r R(x)$ and $\reef R (x)$ be the sets $\{y \in X \mid x \r R y\}$ and $\{y \in X \mid  y \r R x\}$ respectively.
\begin{itemize} 
    \item $\r R$ is \textit{pseudo-reflexive} if $\reef R(x) \cap \dnset_{\r R}(x) \neq \emptyset$ for any $x \in X$;
    \item $R$ is \textit{pseudo-symmetric} if for any $x, x' \in X$, $x' \in \r R(x)$ implies $\reef R (x) \cap \dnset_R(x') \neq \emptyset$.
\end{itemize}

A \textit{fundamental frame} is a relational frame $(X, \r R)$ such that $\r R$ is pseudo-reflexive and pseudo-symmetric.
\end{definition}

Fundamental frames provide a relational semantics for fundamental logic in a straightforward way. Given a fundamental frame $(X, \r R)$, a valuation $V$ maps any propositional letter $p \in \mathcal{L}$ to some $A \in \pchi R(X)$, and is recursively extended to all formulas $\phi \in \mathcal{L}$ as follows:
\begin{itemize}
    \item $V(\neg \phi) = \pneg R V(\phi)$;
    \item $V(\phi \me \psi) = V(\phi) \cap V(\psi)$;
    \item $V(\phi \jo \chi) =C_{\r R} (V(\phi) \cup V(\psi))$.
\end{itemize}

This ensures that formulas are always evaluated as elements in the positive algebra of a fundamental frame. As usual, given a fundamental frame $(X, \r R)$, we write $\phi \models_{(X,\r R)} \psi$ if $V(\phi) \sset V(\psi)$ for any valuation $V$ on $(X,\r R)$. The following establishes the soundness of this semantics for Fundamental Logic.

\begin{theorem}[\cite{holliday2022fundamental}, Prop.~4.14] \label{sndthm}
    For any relational frame $(X, \r R)$, $\pchi R(X)$ is a fundamental lattice iff $(X, \r R)$ is a fundamental frame.
\end{theorem}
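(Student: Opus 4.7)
The map $\pneg R$ is always antitone, and it takes values in $\pchi R(X)$ by construction, since $\pchi R(X)$ is the image of $\pneg R$. The lattice $\pchi R(X)$ is also always bounded: the top $X = \pneg R \emptyset$ belongs to it automatically, and the bottom $\emptyset = \pneg R X$ belongs to it precisely because $\r R$ is co-serial. Thus, verifying that $(\pchi R(X), \pneg R)$ is a fundamental lattice reduces to two conditions on $\pneg R$ restricted to $\pchi R(X)$: (i) dual self-adjointness, and (ii) $A \cap \pneg R A = \emptyset$ for every $A \in \pchi R(X)$. The plan is to show that (ii) is equivalent to pseudo-reflexivity of $\r R$, and (i) to pseudo-symmetry.

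For the ``frame $\Rightarrow$ lattice'' direction, (ii) follows quickly: if some $x$ lay in $A \cap \pneg R A$, pseudo-reflexivity would produce $z$ with $z \r R x$ and $z \ple R x$; then Fact~\ref{facts}(6) yields $z \in A$, contradicting $x \in \pneg R A$. For (i), suppose $A \sset \pneg R B$ with $A, B \in \pchi R(X)$, and assume toward contradiction that some $b \in B$ has $a \r R b$ with $a \in A$. Pseudo-symmetry applied to $a \r R b$ yields $z$ with $z \r R a$ and $z \ple R b$; then $z \in B$ by Fact~\ref{facts}(6), contradicting $A \sset \pneg R B$ at the witness $a \in A$, $z \r R a$. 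Hence $B \sset \pneg R A$.

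For the converse direction, the key observation is that the principal sets $\dnset_{\r R}(x)$ supplied by Fact~\ref{facts}(3) serve as test elements of $\pchi R(X)$. Pseudo-reflexivity follows by applying (ii) to $A = \dnset_{\r R}(x)$: since $x \in A$, we get $x \notin \pneg R A$, so some $z \r R x$ satisfies $z \ple R x$. For pseudo-symmetry, fix $x \r R x'$ and suppose for contradiction that $\reef R(x) \cap \dnset_{\r R}(x') = \emptyset$; unpacking definitions, this means $x \in \pneg R \dnset_{\r R}(x')$. Since $\pneg R \dnset_{\r R}(x') \in \pchi R(X)$, Fact~\ref{facts}(6) upgrades this to the set inclusion $\dnset_{\r R}(x) \sset \pneg R \dnset_{\r R}(x')$. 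Dual self-adjointness (i) then swaps sides to yield $\dnset_{\r R}(x') \sset \pneg R \dnset_{\r R}(x)$, so $x' \in \pneg R \dnset_{\r R}(x)$, forcing every $w \r R x'$ to miss $\dnset_{\r R}(x)$; but $x \r R x'$ together with $x \in \dnset_{\r R}(x)$ provides precisely such a $w$, a contradiction.

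The main subtlety is the converse direction, specifically recognising that the principal sets $\dnset_{\r R}(x)$ carry enough information about $\r R$ to retrieve the two geometric closure properties from the algebraic axioms; once Facts~\ref{facts}(3) and (6) are in hand, the whole argument reduces to routine manipulation of the Galois connection between $\pneg R$ and $\nneg R$ and of the refinement preorder $\ple R$.
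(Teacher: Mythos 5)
Your proof is correct. The paper imports this result from Holliday's Prop.~4.14 without giving a proof, so there is nothing in-paper to compare against; your reduction to the two conditions (dual self-adjointness $\leftrightarrow$ pseudo-symmetry, $A \cap \pneg R A = \emptyset$ $\leftrightarrow$ pseudo-reflexivity), with the principal down-sets $\dnset_{\r R}(x)$ as test elements for the converse, is the standard argument and all steps check out.
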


As is standard in relational semantics, completeness is established via a canonical frame construction. Since this construction plays a central role in this paper, we briefly review it now.

\begin{definition} \label{candef}
    Let $(L, \neg)$ be a fundamental lattice. The \textit{canonical frame} of $(L, \neg)$ is the relational frame $\digamma(L) = (X, \r R)$ given by the following data:
    \begin{itemize}
        \item $X$ is the set of all pairs $(F,I)$ such that $F$ and $I$ are a proper filter and a proper ideal on $L$ respectively, and $F \sset \neg\inv[I]$.
        \item For any $(F,I),(G,J) \in X$, $(F,I)\r R(G,J)$ iff $G \cap I = \emptyset$.
    \end{itemize}    
\end{definition}

It is straightforward to verify that, if $(X, \r R) = \digamma(L)$ for some fundamental lattice $(L, \neg)$, then the positive refinement relation $\ple R$ is given by converse inclusion on filters and the negative refinement relation $\nle R$ is given by converse inclusion on ideals. In other words, for any two points $(F,I),(G,J) \in X$, $(F,I) \ple R (G,J)$ iff $F \rset G$, and $(F,I) \nle R (G,J)$ iff $I \rset J$. Moreover, any fundamental lattice embeds into the positive algebra of its canonical frame via a standard Stone-like map.

\begin{theorem}[\cite{holliday2022fundamental}, Thm.~B.7 ]
    For any fundamental lattice $(L, \neg)$, $\digamma(L) = (X,R)$ is a fundamental frame, and the map $\widehat{\cdot} : (L, \neg) \to (\chi_{\r R}(X), \pneg R)$ given by $a \mapsto \{(F,I) \in X \mid a \in F\}$ is a lattice embedding such that $\widehat{\neg a} = \pneg R \widehat{a}$ for all $a \in L$.
\end{theorem}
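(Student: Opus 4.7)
The theorem has three components: $\digamma(L) = (X, \r R)$ is a fundamental frame; each $\widehat a$ lies in $\pchi R(X)$ and $\widehat{\cdot}$ is an injective bounded-lattice homomorphism; and $\widehat{\neg a} = \pneg R \widehat a$. The whole argument is streamlined by the following auxiliary construction: for a proper filter $F$ on $L$, let $J(F)$ be the ideal generated by $\{\neg a : a \in F\}$. Then $(F, J(F)) \in X$ by definition of $J(F)$, and $F \cap J(F) = \emptyset$: if $b \in F \cap J(F)$ then $b \leq \bigjo_{i=1}^n \neg a_i$ with $a_i \in F$, so setting $a = \bigme_i a_i \in F$ one has $b \leq \neg a$ by antitonicity of $\neg$, whence $b \me a \leq \neg a \me a = 0$ and $b \me a \in F$ contradicts $F$ being proper.

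For fundamentality of $\digamma(L)$, pseudo-reflexivity at $(F,I)$ is witnessed by $(F, J(F))$ (which also subsumes co-seriality). Pseudo-symmetry, given $(F,I) \r R (F',I')$, is witnessed by $(F', J(F'))$: positive refinement holds because the filters coincide, and the only subtle point is $F \cap J(F') = \emptyset$. If $b$ lay in this intersection, then $b \leq \neg a$ for some $a \in F'$, dual self-adjointness of $\neg$ would give $a \leq \neg b$, and $b \in F \sset \neg\inv[I]$ would give $\neg b \in I$, so $a \in I \cap F'$, contradicting $F' \cap I = \emptyset$.

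For the embedding part, I verify $\widehat a \in \pchi R(X)$ using Fact \ref{facts}(1): the forward direction takes $(F'',I'') = (F,I)$, and the converse proceeds by contraposition---if $a \notin F$ then $(\{1\}, \dnset a)$ lies in $X$ and $\r R$-succeeds $(F,I)$, blocking any witness containing $a$. Preservation of $\me$, $\top$, and $\bot$ is immediate from the filter definition together with the identities $\neg 0 = 1$ and $\neg 1 = 0$. For joins, the nontrivial inclusion $\widehat{a \jo b} \sset C_{\r R}(\widehat a \cup \widehat b)$ follows by noting that if $a \jo b \in F$ and $(F',I') \r R (F,I)$, then $a \jo b \notin I'$ forces (WLOG) $a \notin I'$, and $(\upset a, J(\upset a))$ is then a valid point $\r R$-succeeding $(F',I')$ and containing $a$. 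Injectivity uses the same construction: if $a \not\leq b$ then $(\upset a, J(\upset a)) \in \widehat a \setminus \widehat b$.

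For the negation clause, $\widehat{\neg a} \sset \pneg R \widehat a$ is direct: if $\neg a \in F$ and $(F',I') \r R (F,I)$ with $a \in F'$, then $F' \sset \neg\inv[I']$ forces $\neg a \in I' \cap F$, contradicting $F \cap I' = \emptyset$. The converse is the most delicate step: if $\neg a \notin F$, I exhibit $(\upset a, \dnset \neg a)$ as an $\r R$-successor of $(F,I)$ lying in $\widehat a$. Dual self-adjointness of $\neg$ ensures $(\upset a, \dnset \neg a) \in X$ (since $c \geq a$ forces $\neg c \leq \neg a$), while $F \cap \dnset \neg a = \emptyset$ holds because $F$ is upward closed and $\neg a \notin F$. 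The main recurring obstacle throughout is checking that the pairs constructed at each step are proper, which is handled uniformly by invoking $0 \notin F$, $1 \notin I$, and the identities $\neg 0 = 1$ and $\neg 1 = 0$ at the appropriate moments.
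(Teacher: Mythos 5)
The paper states this result as a citation of Holliday's Theorem~B.7 and gives no proof of its own, so there is nothing in-paper to compare against; judged on its own terms, your proposal is correct and follows the standard canonical-frame argument. The auxiliary pair $(F, J(F))$ does witness both pseudo-reflexivity and pseudo-symmetry (the disjointness checks via dual self-adjointness are right), and the witness points $(\{1\},\dnset a)$, $(\upset a, J(\upset a))$ and $(\upset a, \dnset \neg a)$ correctly discharge closure of $\widehat a$, join preservation, injectivity, and the negation clause, with the properness caveats ($a \neq 0$, $\neg a \neq 1$) all subsumed by the case hypotheses as you note.
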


Given a class $\mathfrak{K}$ of fundamental frames, we let $Log(\mathfrak{K})$ be the set $\{ \phi \in \mathcal{L} \mid \forall (X,\r R) \in \mathfrak{K}: \top \models_{(X,\r R)} \phi\}$. Similarly, given some $\Gamma \sset \mathcal{L}$, we let $Mod(\Gamma)$ be the class of fundamental frames $\{(X,\r R) \mid \forall \phi \in \Gamma: \top \models_{(X,\r R)} \phi\}$. As usual, $Log$ and $Mod$ form a Galois connection between the set of subsets of $\mathcal{L}$ and the class of all classes of fundamental frames, both ordered by inclusion.
    
 \subsection{$F$-Morphisms}

 In this section, we introduce the notion of a fundamental morphism between fundamental frames. We show first that such morphisms induce fundamental lattice homomorphisms between positive algebras in a natural way, before establishing that fundamental lattice homomorphisms induce fundamental morphisms between canonical frames.

\begin{definition}
    Let $(X,\r R)$ and $(Y,\r S)$ be two fundamental frames. A \textit{fundamental morphism} ($f$-morphism for short) is a map $h: X \to Y$ satisfying the following four properties for any $x,x' \in X$ and $y,y' \in Y$:

    \begin{enumerate}
        \item $x \r R x'$ implies $h(x) \r S h(x')$;
        \item $h(x) \r S y$ implies $\exists x' \in X: x \r R x'$ and $h(x') \ple S y$;
        \item $y \r S h(x)$ implies $\exists x' \in X: x' \r R x$ and $h(x') \nle S y$;
        \item $y \r S h(x)$ implies $\exists x'' \in X: x'' \r R x$ and $h(x'') \ple S y$.
    \end{enumerate}
\end{definition}

The following is a diagrammatic representation of the conditions required on $f$-morphisms. Single arrows are labelled according to which relation they represent, and double arrows are implications. From left to right, each diagram corresponds to conditions $1$, $2$ and $3-4$ respectively.

\adjustbox{width=0.9\textwidth,center}{%
\begin{tikzcd}
	&&&&&& { y} &&&& { x'} && { h(x')} \\
	{ x'} & { h(x')} && { y} &&&&& { y} && {\phantom{0}} \\
	{\phantom{0}} & {\phantom{0}} && {\phantom{0}} & { x'} && { h(x')} && {\phantom{0}} && { x} &&& {y} \\
	{x} & {h(x)} && {h(x)} &&&&& {h(x)} \\
	&&&& {x} &&&&&& {x''} && {h(x'')}
	\arrow["{\r S}"{description}, from=4-4, to=2-4]
	\arrow["{\leq_{\r S}}"{description}, from=3-7, to=1-7]
	\arrow["{\r R}"{description}, from=5-5, to=3-5]
	\arrow["{h}"{description}, from=3-5, to=3-7]
	\arrow["{\r R}"{description}, from=4-1, to=2-1]
	\arrow["{\ S}"{description}, from=4-2, to=2-2]
	\arrow["{\r R}"{description}, from=1-11, to=3-11]
	\arrow["{\r S}"{description}, from=2-9, to=4-9]
	\arrow["{h}"{description}, from=1-11, to=1-13]
	\arrow["{ \reef{_S \!\geq}}"{description}, from=1-13, to=3-14]
	\arrow[Rightarrow, from=3-1, to=3-2]
	\arrow["{\exists x':}", Rightarrow, from=3-4, to=3-5]
	\arrow["{\r R}"{description}, from=5-11, to=3-11]
	\arrow["{h}"{description}, from=5-11, to=5-13]
	\arrow["{\leq_{\mathsf{S}}}"{description}, from=5-13, to=3-14]
	\arrow["{\exists x',x'':}", Rightarrow, from=3-9, to=3-11]
\end{tikzcd}}

\begin{definition}
    Let $(L, \neg), (M, \resim)$ be two fundamental lattices. A \textit{fundamental lattice homomorphism} is a lattice homomorphism $f: (L, \neg) \to (M, \resim)$ such that $f(\neg a) = \resim f(a)$ for any $a \in L$.
\end{definition}

\begin{lemma} \label{fmorlma}
    For any $f$-morphism $h: (X,\r R) \to (Y,\r S)$, the map $\chi(h) : \pchi S(Y) \to \pchi R(X)$ given by $A \mapsto h\inv[A]$ is a fundamental lattice homomorphism.
\end{lemma}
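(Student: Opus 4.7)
The plan is to verify in sequence that $\chi(h)$ is (i) well-defined, i.e., $h\inv[A] \in \pchi R(X)$ whenever $A \in \pchi S(Y)$, (ii) a bounded lattice homomorphism, and (iii) compatible with the negations, i.e., $\chi(h)(\pneg S A) = \pneg R \chi(h)(A)$. My main tools will be Fact \ref{facts}(1), which characterizes elements of $\pchi R(X)$ and $\pchi S(Y)$, and Fact \ref{facts}(6), which characterizes $\ple R$ (resp.~$\ple S$) in terms of membership in $\pchi$-sets. Each of the four conditions in the definition of an $f$-morphism will be used essentially once, and it is illuminating that they pair off naturally with the four checks below.

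For (i), fix $A \in \pchi S(Y)$ and apply Fact \ref{facts}(1) to $h\inv[A]$. To show $x \in h\inv[A]$ forces the right-hand side, I would take $x' \r R x$, push to $h(x') \r S h(x) \in A$ via condition (1), extract some $z \in A$ with $h(x') \r S z$ by applying Fact \ref{facts}(1) to $A$, pull this back via condition (2) to $x'' \in X$ with $x' \r R x''$ and $h(x'') \ple S z$, and conclude $h(x'') \in A$ by Fact \ref{facts}(6). For the converse, given $y' \r S h(x)$, I would invoke condition (3) to obtain $x' \r R x$ with $h(x') \nle S y'$; the assumed property of $x$ then yields $y \in X$ with $x' \r R y$ and $h(y) \in A$, and condition (1) combined with the definition of $\nle S$ gives $y' \r S h(y)$, supplying the witness needed to conclude $h(x) \in A$ via Fact \ref{facts}(1). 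This already exhibits the typical pattern: condition (2) supplies ``forward'' witnesses that are $\ple S$-refined, condition (3) supplies ``backward'' witnesses that are $\nle S$-refined.

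For (ii), meets are preserved trivially because preimage commutes with intersection, and the top element $Y$ pulls back to $X$. Preservation of the bottom follows from co-seriality of $\r S$ and $\r R$, which forces $C_{\r S}(\emptyset) = \emptyset = C_{\r R}(\emptyset)$. The one substantive check is joins, where I need $h\inv[C_{\r S}(A \cup B)] = C_{\r R}(h\inv[A] \cup h\inv[B])$. I would establish both inclusions by adapting the two parts of (i), with $A \cup B$ playing the role of the witness-set: the $\sset$ inclusion uses conditions (1) and (2) and Fact \ref{facts}(6), while the $\rset$ inclusion uses conditions (3) and (1) together with the definition of $\nle S$.

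The heart of the proof is (iii), and this is where condition (4) finally earns its keep. The inclusion $h\inv[\pneg S A] \sset \pneg R h\inv[A]$ follows immediately from condition (1): if $x' \r R x$, then $h(x') \r S h(x)$ blocks $h(x') \in A$ whenever $h(x) \in \pneg S A$. For the reverse inclusion, starting from $x \in \pneg R h\inv[A]$ and $y \r S h(x)$, condition (4) supplies $x'' \r R x$ with $h(x'') \ple S y$; since the hypothesis on $x$ forces $h(x'') \notin A$, Fact \ref{facts}(6) forces $y \notin A$, and hence $h(x) \in \pneg S A$. The main pitfall throughout is keeping track of the four relations $\r R$, $\reef{R}$, $\ple R$, $\nle R$ and their $Y$-counterparts without confusion, but no individual step is a genuine obstacle once the correspondence between conditions (1)--(4) and the four verification tasks is fixed in mind.
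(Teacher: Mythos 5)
Your proposal is correct and takes essentially the same route as the paper: the paper packages your checks (i) and (ii) into the single identity $C_{\r R}(h\inv[A]) = h\inv[C_{\r S}(A)]$, proved with exactly the same deployment of conditions (1)--(3) of $f$-morphisms together with Fact~\ref{facts}, and it handles negation precisely as you do, using condition (1) for one inclusion and condition (4) for the other.
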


\begin{proof}
We claim that for any $A \sset Y$, $C_{\r R}(h\inv[A]) = h\inv[C_{\r S}(A)]$. It is routine to check that this implies that $\chi(h): \pchi S(Y) \to \pchi R(X)$ is well-defined and a lattice homomorphism. Fix some $A \sset Y$. For the right-to-left inclusion, assume that $h(x) \in C_{\r S}(A)$, and let $x' \r R x$. Then by property $1$ of $f$-morphisms, $h(x') \r S h(x)$. Since $h(x) \in C_{\r S}(A)$, this means that there is $y \in A$ such that $h(x') \r S y$. By property $2$ of $f$-morphisms, there is $x'' \in X$ such that $x' \r R x''$ and $h(x'') \ple S y$. But the latter implies that $h(x'') \in A$ and thus that $x'' \in h\inv[A]$. This shows that $x \in C_{\r R}(h\inv[A])$.

For the converse inclusion, assume that $x \in C_{\r R}(h\inv[A])$, and let $y \r S h(x)$. By property $3$ of $f$-morphisms, there is $x' \in X$ such that $x' \r R x$ and $h(x') \nle S y$. Since $x \in C_{\r R}(h\inv[A])$, there is $x'' \in h\inv[A]$ such that $x' \r R x''$. By property $1$ of $f$-morphisms, we have that $h(x')\r S h(x'')$. As $h(x') \nle S y$, we also have $y \r S h(x'')$. But this shows that $x \in C_{\r S}(h\inv[A])$, as desired.

Finally, let us show that $\chi(h)$ preserves fundamental complements. Fix some $A \in \pchi S(Y)$. We need to show that for any $x \in X$, \[x \in h\inv[\neg_{\r S}A]\Leftrightarrow x \in \neg_{\r R} h\inv[A].\] Suppose first that $h(x) \in \neg_{\r S}A$, and let $x' \r R x$. Then $h(x') \r S h(x)$, hence $h(x') \notin A$. This shows the left-to-right direction of the biconditional. For the converse, assume that $x \in \neg_{\r R} h\inv[A]$, and let $y \r S h(x)$. By property $4$ of $f$-morphisms, there is $x'' \in X$ such that $x'' \r R x$ and $h(x'') \ple S y$. Since $x \in \neg_{\r R} h \inv[A]$, we have that $h(x'') \notin A$, from which it also follows that $y \notin A$. This completes the proof.
\end{proof}

One can verify that the definition of an $f$-morphism is slightly stronger than what would need to be required of a map $h$ to ensure that its inverse image is a fundamental lattice homomorphism.\footnote{See Holliday \cite{holliday2022fundamental}, footnote 15 for more on this.} Nonetheless, our definition is easy to state and general enough, as established by the following lemma.

\begin{lemma} \label{canmorlma}
    Let $f: (L, \neg) \to (M, \resim)$ be a fundamental lattice homomorphism. Then the map $(F,I) \mapsto (f\inv[F],f\inv[I])$ yields a $f$-morphism $\digamma(f): \digamma(M) \to \digamma(L)$ such that $\chi(\digamma(f))(\widehat{a}) = \widehat{f(a)}$ for any $a \in L$.
\end{lemma}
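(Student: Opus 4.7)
The plan is to verify each of the four defining conditions of an $f$-morphism for the map $h := \digamma(f)$, and then read off the naturality identity by unfolding definitions. Well-definedness of $h$ is routine: since $f$ is a fundamental lattice homomorphism, preimages of proper filters (resp.~ideals) in $M$ are proper filters (resp.~ideals) in $L$, and the condition $f\inv[F] \sset \neg\inv[f\inv[I]]$ follows by applying $f(\neg a) = \resim f(a)$ termwise. Condition $1$ is immediate: $G \cap I = \emptyset$ in $M$ forces $f\inv[G] \cap f\inv[I] = \emptyset$ in $L$, since $a$ in the latter intersection would give $f(a)$ in the former.

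Conditions $2$--$4$ follow a common recipe. Each asks, given $(G',J') \in \digamma(L)$ related in some way via $\r S$ to $h(F,I) = (f\inv[F], f\inv[I])$, to exhibit a point of $\digamma(M)$ playing the dual role with respect to $(F,I)$. The natural candidates are the filter $G_\ast := \upset f[G']$ and the ideal $J_\ast := \dnset f[J']$ generated in $M$ by the forward images. For conditions $2$ and $4$, I would take the filter component of the witness to be $G_\ast$ and the ideal component to be $\dnset \resim[G_\ast]$; for condition $3$, I would take the ideal component to be $J_\ast$ and the trivial filter $\{1_M\}$ as the filter component, noting that $\resim 1_M = 0_M \in J_\ast$.

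The main obstacle is verifying properness at each step, which leverages the fundamental lattice axioms in subtle ways. Condition $2$ has hypothesis $G' \cap f\inv[I] = \emptyset$: then $G_\ast \cap I = \emptyset$ because $b \in G_\ast \cap I$ would give $b \geq f(a)$ for some $a \in G'$ with $b \in I$, forcing $f(a) \in I$ and $a \in f\inv[I] \cap G'$; properness of $G_\ast$ follows since $0_M \in I$. For $\dnset \resim[G_\ast]$ to be proper, a putative witness $1_M \leq \resim b_1 \jo \dots \jo \resim b_n$ with $b_i \in G_\ast$ would, on setting $b := b_1 \me \dots \me b_n \in G_\ast$, yield $\resim b = 1_M$ by antitonicity of $\resim$ and hence $b \leq \resim 1_M = 0_M$ by dual self-adjointness, contradicting $b \in G_\ast$. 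Condition $4$ has hypothesis $f\inv[F] \cap J' = \emptyset$; the delicate point is now properness of $G_\ast$ itself: $0_M \in G_\ast$ would give $a \in G'$ with $f(a) = 0_M$, so $\neg a \in J'$ (by $G' \sset \neg\inv[J']$) and $f(\neg a) = \resim 0_M = 1_M \in F$, placing $\neg a$ in the forbidden intersection. Condition $3$ is easier: $F \cap J_\ast = \emptyset$ because $b \in F$ with $b \leq f(c)$ for some $c \in J'$ forces $f(c) \in F$ and hence $c \in f\inv[F] \cap J'$, so $J_\ast$ is proper.

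Finally, the naturality identity $\chi(\digamma(f))(\widehat{a}) = \widehat{f(a)}$ is pure bookkeeping: $(G,J) \in \digamma(f)\inv[\widehat{a}]$ iff $(f\inv[G], f\inv[J]) \in \widehat{a}$ iff $a \in f\inv[G]$ iff $f(a) \in G$ iff $(G,J) \in \widehat{f(a)}$.
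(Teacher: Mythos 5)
Your witnesses are exactly the ones the paper uses (your $\dnset \resim[G_\ast]$ is the ideal generated by the paper's $\{\resim c \mid c \in G_\ast\}$, a harmless and in fact slightly more careful choice), and your treatment of well-definedness, conditions $1$--$3$, and the naturality identity matches the paper's proof step for step. There is, however, one genuine omission, and it happens to be the most substantive computation in the lemma. For condition $4$ the witness $x'' = (G_\ast, \dnset \resim[G_\ast])$ must satisfy $x'' \r R (F,I)$, i.e.\ $\dnset \resim[G_\ast] \cap F = \emptyset$; you only check properness of $G_\ast$ and call that ``the delicate point,'' but properness is not what condition $4$ turns on. Contrast with condition $2$: there the required relation $(F,I) \r R x'$ involves only the \emph{filter} component $G_\ast$ of the witness, and that is the intersection you do check; in condition $4$ the relation points the other way and involves the \emph{ideal} component of the witness against $F$. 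The missing step does go through, but it is precisely where the hypothesis $J' \cap f\inv[F] = \emptyset$ interacts with $G' \sset \neg\inv[J']$ and with $\resim f(b) = f(\neg b)$: if $a \in F$ and $a \leq \resim c$ for some $c \geq f(b)$ with $b \in G'$, then antitonicity gives $a \leq \resim c \leq \resim f(b) = f(\neg b)$, so $\neg b \in f\inv[F]$, while $b \in G'$ forces $\neg b \in J'$, contradicting the hypothesis. Add this verification and your argument is complete and coincides with the paper's.
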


\begin{proof}
    Given a pair $(F,I)$ in $\digamma(M)$, let $\digamma(f)(F,I) = (f\inv[F], f\inv[I])$. To see that $\digamma(f)(F,I) \in \digamma(L)$, it is enough to notice that \[a \in f\inv[F] \Rightarrow f(a) \in F \Rightarrow \mathord{\resim f(a)} \in I \Rightarrow f(\neg a) \in I \Rightarrow \neg a \in f\inv[I].\] Let us now check that $\digamma(f)$ is a $f$-morphism. In what follows, we will write points in $\digamma(M)$ and $\digamma(L)$ as pairs of the form $x : (x_F, x_I)$. Let $\r R$ be the canonical relation on $\digamma(M)$ given by $x\r R x'$ iff $x_I \cap x'_F = \emptyset$, and $\r S$ the canonical relation on $\digamma(L)$ defined similarly.

    \begin{enumerate}
        \item For condition $1$, we claim that for any $x, x' \in \digamma(M)$, $x_I \cap x'_F = \emptyset$ implies that $\digamma(f)(x)_I \cap \digamma(f)(x')_F = \emptyset$. This clearly follows from the fact that $\digamma(f)(x)_I = f\inv[x_I]$ and that $\digamma(f)(x')_F = f\inv[x'_F]$. As a consequence, we have that $x\r Rx'$ implies $\digamma(f)(x)\r S\digamma(f)(x')$.
        \item Now suppose that $\digamma(f)(x)\r Sy$, i.e., $f\inv[x_I] \cap y_F = \emptyset$, and let $x'=(\upset f[y_F],I')$, where $I' = \{\neg c \mid c \in \upset f[y_F]\}$. To verify that $x' \in \digamma(M)$, it is enough to show that $0 \notin \upset f[y_F]$. But if there is $c \in y_F$ such that $f(c) \leq 0$, then $c \in f\inv[x_I]$, contradicting our assumption. 
        Note that $y_F \sset f\inv[\upset f [y_F]]$, from which it follows that $\digamma(f)(x') \ple S y$. Hence we only need to verify that $x \r R x'$, i.e., that $x_I \cap \upset f[y_F] = \emptyset$. But if there are $a \in x_I, b \in y_F$ with $f(b) \leq a$, it follows that $b \in f\inv[x_I] \cap y_F$, contradicting our assumption. Hence $x'$ is the required point in $\digamma(M)$.
        \item For condition $3$, suppose that $y \r S \digamma(f)(x)$, i.e., $y_I \cap f\inv[x_F] = \emptyset$. Let $x' = (\{1\}, \dnset f[y_I])$. Let us first verify that $x' \in \digamma(M)$. Clearly, $x'_F$ is a filter and $x'_I$ is an ideal, and if there is $b \in y_I$ such that $1 \leq f(b)$, then $b \in f\inv[x_F]$, contradicting our assumption. Moreover, $y_I \sset f\inv[\dnset f[y_I]]$, which means that $\digamma(f)(x') \nle S y$. Hence it only remains to check that $x' \r R x$, i.e., that $\dnset f[y_I]] \cap x_F = \emptyset$. To see this, suppose that there is $a \in x_F$ and $b \in y_I$ such that $a \leq f(b)$. Then $b \in f\inv[x_F]$, contradicting our assumption.
    \item Finally, we check condition $4$. Once again, suppose that $y \r S \digamma(f)(x)$, i.e., $y_I \cap f\inv[x_F] = \emptyset$. Let $x'' = (\upset f[y_F],I')$, where $I' = \{\neg c \mid c \in \upset f[y_F]\}$. To see that $x'' \in \digamma(M)$, it is enough to verify that $ 0 \notin \upset f[y_F]$. But if there is $c \in y_F$ such that $0 = f(c)$, then $f(\neg c) = \neg f(c) = 1$, so $\neg c \in y_I \cap f\inv[x_F]$, contradicting our assumption. 
    Once again, it is easy to verify that $\digamma(x'') \ple S y$, so we only check that $x'' \r Rx$. This amounts to verifying that $\{\neg c \mid c \in \upset f[y_F]\} \cap x_F = \emptyset$. Suppose, towards a contradiction, that there is $a \in x_F$ and $c \in \upset f[y_F]$ such that $a \leq \neg c$. Then there is $b \in y_F$ such that $ f(b) \leq c$, which implies that $a \leq \neg c \leq \neg f(b) \leq f(\neg b)$, and thus $\neg b \in f\inv[x_F]$. At the same time, $b \in y_F$ implies $\neg b \in y_I$, which means that $y_I \cap f\inv[x_F] \neq \emptyset$, contradicting our assumption.
    \end{enumerate}

    We conclude by showing that $\chi(\digamma(f))(\widehat{a}) = \widehat{f(a)}$ for any $a \in L$. It is enough to show that for any $x \in \digamma(M)$, $f(a) \in x_F$ iff $x \in \digamma(f)\inv[\widehat{a}]$. But the latter is equivalent to $a \in f\inv[x_F]$, which clearly holds iff $f(a) \in x_F$. This concludes the proof.
\end{proof}

The results gathered so far can be conveniently summed up with the following categorical perspective. Let $\mathbf{FL}$ be the category of fundamental lattices and fundamental lattice homomorphisms between them, and let $\mathbf{FFrm}$ be the category of fundamental frames and $f$-morphisms between them. Then we have two contravariant functors $\digamma: \mathbf{FL} \to \mathbf{FFrm}$ and $\chi: \mathbf{FFrm} \to \mathbf{FL}$ defined as follows:
\begin{itemize}
    \item For any $L \in \mathbf{FL}$, $\digamma(L)$ is the canonical frame of $L$ as defined in Definition \ref{candef};
    \item For any fundamental lattice homomorphism $f: L \to M$, $\digamma(f) : \digamma(M) \to \digamma(L)$ is given by $\digamma(h)(x_F,x_I) = (f\inv[x_F],f\inv[x_I])$ for any $(x_F,x_I) \in \digamma(M)$, and is a $f$-morphism by Lemma \ref{canmorlma};
    \item For any $(X,\r R) \in \mathbf{FFrm}$, $\chi(X,\r R) = (\pchi R(X),\pneg R)$, which is a fundamental lattice by Theorem \ref{sndthm};
    \item For any $f$-morphism $h: (X,\r R) \to (Y,\r S)$, $\chi(h): \chi(Y,\r S) \to \chi(X,\r R)$ is given by $\chi(h)(A) = h\inv[A]$ for any $A \in \pchi S(Y)$ and is a fundamental lattice homomorphism by Lemma \ref{fmorlma}.
\end{itemize}

Finally, for any fundamental lattice $(L,\neg)$, the map $\widehat{\cdot} : L \to \chi\digamma(L)$ is an embedding, a fact that we will use several times below.

 \section{Subframes, Dense Images and Coproducts} \label{section3}

 In this section, we identify the relational duals of subalgebras, homomorphic images and products of fundamental lattices. For the first two, we will give necessary and sufficient conditions on a $f$-morphism $h$ for its dual $\chi(h)$ to be injective (resp. surjective), as well as necessary and sufficient conditions on $\digamma(f)$ when a fundamental homomorphism $f$ is injective (resp. surjective). Because the class of relational frames we consider is larger than the class of dual frames of fundamental lattices, the two conditions do not coincide. However, as we shall see in the next section, the characterization given here will be enough to yield a version of the Goldblatt-Thomason theorem.

 We start by identifying when the dual of a $f$-morphism is injective or surjective.

 \begin{definition} \label{defdenemb}
Let $h: (X, \r R) \to (Y,\r S)$ be a $f$-morphism.

\begin{itemize}
    \item $h$ is \textit{dense} if for any $y, y' \in Y$:

     \[y' \r S y \Rightarrow \exists x \in X: h(x) \ple S y \text{ and } y' \r S h(x).\]

     \item $h$ is an \textit{embedding} if for any $x, x' \in X$:

     \[h(x) \r S h(x') \Rightarrow \exists z \in X: x\r R z \text{ and } z \ple R x'.\]
     \end{itemize}
 \end{definition}

 The definitions of dense $f$-morphisms and embeddings can be given mirroring diagrammatic representations, with dense $f$-morphisms represented on the left and embeddings on the right:
 
\adjustbox{width=0.5\textwidth,center}{%
\begin{tikzcd}
	&& {y} &&&&& {x'} \\
	{y} &&&&& {h(x')} \\
	{\phantom{0}} && {h(x)} &&& {\phantom{0}} && z \\
	{y'} &&&&& {h(x)} \\
	&& {y'} &&&&& {x}
	\arrow["{\r S}"{description}, from=4-1, to=2-1]
	\arrow["{\exists x:}", Rightarrow, from=3-1, to=3-3]
	\arrow["{\ple S}"{description}, from=3-3, to=1-3]
	\arrow["{\r S}"{description}, from=5-3, to=3-3]
	\arrow["\r S"{description}, from=4-6, to=2-6]
	\arrow["{\exists z:}", Rightarrow, from=3-6, to=3-8]
	\arrow["\r R"{description}, from=5-8, to=3-8]
	\arrow["{\ple R}"{description}, from=3-8, to=1-8]
\end{tikzcd}}
\vspace{0.2em}

Let us show first that dense $f$-morphisms induce injective homomorphisms.

\begin{lemma} \label{injlma}
    For any $f$-morphism $h: (X, \r R) \to (Y,\r S)$, $\chi(h)$ is injective if and only if $h$ is dense.
\end{lemma}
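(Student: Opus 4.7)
The plan is to prove both directions by directly exploiting the characterisation of elements of $\pchi S(Y)$ from Fact \ref{facts}(1) and the order-preservation property of Fact \ref{facts}(6).

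For density implies injectivity of $\chi(h)$, I would fix two distinct elements $A, B \in \pchi S(Y)$ and, without loss of generality, some $y \in A \setminus B$. Because $B \in \pchi S(Y)$, Fact \ref{facts}(1) applied to $y \notin B$ yields some $y' \r S y$ such that every $z$ with $y' \r S z$ lies outside $B$. Applying the density hypothesis to the pair $y' \r S y$ produces $x \in X$ with $h(x) \ple S y$ and $y' \r S h(x)$. Fact \ref{facts}(6) together with $y \in A$ then delivers $h(x) \in A$, while the choice of $y'$ forces $h(x) \notin B$. Hence $x \in h\inv[A] \setminus h\inv[B]$, so $\chi(h)(A) \neq \chi(h)(B)$.

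For the converse, I would argue contrapositively. Suppose density fails at some pair $y' \r S y$, meaning that no $x \in X$ satisfies both $h(x) \ple S y$ and $y' \r S h(x)$. Set $A = \dnset_{\r S}(y)$ and $B = \dnset_{\r S}(y) \cap \overline{\r S}(y')$. Both sets lie in $\pchi S(Y)$ by Fact \ref{facts}(3) and the fact that $\pchi S(Y)$, being the family of fixpoints of the closure operator $C_{\r S}$, is closed under intersection. Reflexivity of $\ple S$ (Fact \ref{facts}(5)) gives $y \in A$, while $y' \r S y$ gives $y \notin B$, so $A \neq B$. On the other hand, for any $x \in X$: if $h(x) \in A$ then $h(x) \ple S y$, and the failure of density then forces $\neg y' \r S h(x)$, placing $h(x)$ in $\overline{\r S}(y')$ and hence in $B$; the other inclusion $B \sset A$ is trivial. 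Therefore $h\inv[A] = h\inv[B]$, and $\chi(h)$ is not injective.

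The only nontrivial design choice is picking the right distinguishing pair in the converse direction: $\dnset_{\r S}(y)$ is the smallest element of $\pchi S(Y)$ containing $y$ (indeed, $C_{\r S}(\{y\}) = \dnset_{\r S}(y)$), and intersecting it with $\overline{\r S}(y')$ surgically removes $y$ without touching $h[X]$ precisely because density fails at $(y', y)$. The forward direction is essentially a diagram chase, aligning the density diagram of Definition \ref{defdenemb} with the existential pattern in Fact \ref{facts}(1) that witnesses $y \notin B$; I do not anticipate any calculational obstacles beyond this.
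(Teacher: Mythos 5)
Your proof is correct and takes essentially the same route as the paper's: the forward direction is the same density diagram chase (you phrase it as directly producing a point of $h\inv[A]\setminus h\inv[B]$, the paper as showing $h\inv[A]\sset h\inv[B]$ implies $A \sset B$), and the converse uses the same witnesses $\dnset_{\r S}(y)$ and $\overline{\r S}(y')$. Your converse is in fact marginally tidier: by intersecting with $\dnset_{\r S}(y)$ you exhibit two distinct elements of $\pchi S(Y)$ with literally equal preimages, whereas the paper only exhibits $A \nsubseteq B$ with $h\inv[A]\sset h\inv[B]$ and leaves implicit that this already contradicts injectivity of a meet-preserving map.
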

\begin{proof} Assume first that $h$ is dense, and fix $A, B \in \pchi S(Y)$. We claim that $h\inv[A] \sset h\inv[B]$ implies $A \sset B$. To see this, fix $y \in A$ and $y' \in Y$ such that $y'\r Sy$. Since $h$ is dense, there is $x \in X$ such that $y' \r S h(x) \ple S y$. Since $y \in A$, we also have that $h(x) \in A$, hence $x \in h\inv[A] \sset h\inv[B]$, from which it follows that $h(x) \in B$. But this implies that $y \in  C_{\r R}(B) = B$.

Conversely, let us now assume that $h$ is not dense. This means that we have $y, y' \in Y$ such that $y'\r Sy$ and $h(x) \ple S y$ implies $\neg y' \r S h(x)$ for any $x \in X$. Now consider $A = \dnset_S(y)$ and $B = \overline{\r S}(y')$. By choice of $y$ and $y'$, we have that $h\inv[A] \sset h\inv[B]$, but also note that $y \in A \setminus B$. Hence $h$ is not injective. \end{proof}

Let us now show that embeddings induce surjective homomorphisms.

\begin{lemma} \label{surjlma}
    For any $f$-morphism $h: (X, \r R) \to (Y, \r S)$, $\chi(h)$ is surjective if and only if $h$ is an embedding.
\end{lemma}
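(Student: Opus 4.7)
My plan is to handle both directions by leaning on Fact \ref{facts}, which characterizes both membership in $\pchi R(X)$ (clause 1) and the positive refinement relation $\ple R$ (clause 6) purely in terms of the positive algebra.

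For the ``if'' direction, assume $h$ is an embedding and fix $A \in \pchi R(X)$. A natural candidate for a $\chi(h)$-preimage of $A$ is $B := C_{\r S}(h[A])$, which lies in $\pchi S(Y)$ by construction, and I would then verify $h\inv[B] = A$. The inclusion $A \sset h\inv[B]$ is immediate from extensivity of $C_{\r S}$. For the reverse inclusion, I would take $x$ with $h(x) \in C_{\r S}(h[A])$ and show $x \in A$ via clause 1 of Fact \ref{facts}, by fixing an arbitrary $x' \r R x$ and producing $z$ with $x' \r R z$ and $z \in A$. Property 1 of $f$-morphisms lifts $x' \r R x$ to $h(x') \r S h(x)$, so that $h(x) \in C_{\r S}(h[A])$ furnishes some $a \in A$ with $h(x') \r S h(a)$. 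Now the embedding condition applied to $h(x') \r S h(a)$ delivers $z \in X$ with $x' \r R z$ and $z \ple R a$, and clause 6 of Fact \ref{facts} then forces $z \in A$, completing the verification.

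For the ``only if'' direction, assume $\chi(h)$ is surjective and $h(x) \r S h(x')$. Property 2 of $f$-morphisms already hands us $z \in X$ with $x \r R z$ and $h(z) \ple S h(x')$, so it only remains to prove $z \ple R x'$. By clause 6 of Fact \ref{facts} applied in $(X, \r R)$, it suffices to show that every $A \in \pchi R(X)$ containing $x'$ also contains $z$. Given such an $A$, surjectivity supplies $B \in \pchi S(Y)$ with $A = h\inv[B]$, whence $h(x') \in B$; since $h(z) \ple S h(x')$, clause 6 of Fact \ref{facts} applied inside $(Y, \r S)$ forces $h(z) \in B$, and hence $z \in h\inv[B] = A$.

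The main obstacle is in the ``if'' direction: the raw image $h[A]$ typically fails to belong to $\pchi S(Y)$, which is why the closure $C_{\r S}(h[A])$ is essential, and the embedding condition is precisely what allows a witness $a \in A$ obtained from $C_{\r S}$ to be transported back through $h$ to a witness $z$ certifying $x \in C_{\r R}(A) = A$. By contrast, the converse direction is almost formal once one observes that surjectivity of $\chi(h)$ lets $\ple R$ inherit its characterization from $\ple S$ via Fact \ref{facts}.6.
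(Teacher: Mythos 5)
Your proof is correct and follows essentially the same route as the paper's: the forward direction exhibits $C_{\r S}(h[A])$ as a preimage of $A$ using property 1 of $f$-morphisms together with the embedding condition, and the converse uses property 2 plus Fact \ref{facts}(vi). The only cosmetic differences are that the paper factors the forward direction through the claim $A = h\inv h[A]$ and the already-established commutation of $h\inv$ with the closure operators, and in the converse it applies surjectivity to the single set $\dnset_{\r R}(x')$ rather than quantifying over all of $\pchi R(X)$.
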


\begin{proof}
    Suppose first that $h$ is an embedding, and fix $A \in \pchi R(X)$. We claim that $A = h\inv h[A]$. Note that this implies that \[\chi(h)(C_S(h[A])) = h\inv[C_S(h[A])] = C_R(h\inv h[A]) = C_R(A) = A,\] and thus that $\chi(h)$ is surjective. For the proof of the claim, note first that the inclusion $A \sset h\inv h[A]$ is clear. For the converse, assume $h(x) \in h[A]$ for some $x \in X$. This means that $h(x) = h(x')$ for some $x' \in A$. Now let $z \r Rx$. This implies that $h(z)\r Sh(x')$, so, since $h$ is an embedding, there is $w \in X$ with $z\r Rw \ple R x'$. But this means that $w \in A$, and therefore that $x \in C_R(A) = A$.

    Conversely, assume that $\chi(h)$ is surjective and fix $x, x' \in X$ such that $h(x) \r S h(x')$. Since $h$ is an $f$-morphism, there is $z \in X$ such that $x\r Rz$ and $h(z) \ple S h(x')$. Now since $\chi(h)$ is surjective, $\dnset_{\r R}(x') = h\inv[A]$ for some $A \in \pchi S(Y)$. Since $h(z) \ple S h(x')$ and $h(x') \in A$, it follows from Fact \ref{facts}$(vi)$ that $h(z) \in A$, and therefore $z \in \dnset_{\r R}(x')$. But this means that $z \ple R x'$, and therefore that $h$ is an embedding.
\end{proof}

Dense $f$-morphisms therefore induce injective fundamental homomorphisms, and embeddings induce surjective fundamental homomorphisms. Interestingly, this means that the dual of an $f$-morphism may be an isomorphism without $f$ itself being an isomorphism of fundamental frames. This situation is not uncommon. A similar phenomenon occurs for dense embeddings in the so-called ``forcing duality'' between complete Boolean algebras and separative posets \cite{kunen2014set}, and in its generalization to complete lattices given by the b-frame duality presented in \cite{massas2023b}. 

Starting from fundamental homomorphisms instead of $f$-morphisms, let us now identify conditions on $\digamma(h)$ that are equivalent to $h$ being injective or surjective for a fundamental homomorphism $h$.

\begin{definition}
    Let $h: (X,\r R) \to (Y,\r S)$ be a $f$-morphism. Then $h$ is \textit{strongly dense} if for any $y \in Y$, there is $x \in X$ such that $h(x) \ple S y$ and $y \nle S h(x)$, and it is a \textit{strong embedding} if for any $x,x' \in X$, $h(x) \r S h(x')$ implies $x \r R x'$.
\end{definition}

Clearly, these two conditions are strengthenings of those in Definition \ref{defdenemb}. As the result below establishes, they are the frame correspondents of injectivity and surjectivity in the case of canonical frames.

\begin{lemma} \label{replma2}
    Let $f: (L, \neg) \to (M, \resim)$ be a fundamental lattice homomorphism. Then:
    \begin{enumerate}
        \item $f$ is injective iff $\digamma(f)$ is strongly dense.
        \item $f$ is surjective iff $\digamma(f)$ is a strong embedding.
    \end{enumerate}
\end{lemma}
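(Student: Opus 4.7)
The plan is to prove both biconditionals by handling each direction separately via explicit constructions or contrapositives, reusing the style of argument from Lemma \ref{canmorlma}.

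For the forward direction of part (1), assume $f$ is injective; as $f$ is a lattice homomorphism, this is equivalent to $f$ being order-reflecting. For any $y = (y_F, y_I) \in \digamma(L)$, set $x = (\upset f[y_F], \dnset f[y_I])$. Injectivity guarantees both components are proper: for instance, $0 \in \upset f[y_F]$ would yield some $c \in y_F$ with $f(c) = 0 = f(0)$, hence $c = 0$, contradicting propriety of $y_F$; an analogous argument handles $\dnset f[y_I]$. The compatibility $x_F \sset \resim\inv[x_I]$ follows from antitonicity of $\resim$ together with $y_F \sset \neg\inv[y_I]$, exactly as in case 2 of the proof of Lemma \ref{canmorlma}. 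Moreover, order-reflection gives $f\inv[\upset f[y_F]] = y_F$ and $f\inv[\dnset f[y_I]] = y_I$, so in fact $\digamma(f)(x) = y$, trivially witnessing both strong-density conditions.

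For the reverse direction of part (1), I argue contrapositively. From $f(a) = f(b)$ with $a \neq b$, without loss of generality $a \not\leq b$ (hence $a \neq 0$), I exhibit a specific $y \in \digamma(L)$ of the form $(\upset a, J)$ for an ideal $J$ compatible with $\upset a$, and show that no $x = (x_F, x_I) \in \digamma(M)$ can satisfy both strong-density conditions simultaneously. The key observation is that $\digamma(f)(x) \ple S y$ forces $f(a) \in x_F$, hence $f(b) \in x_F$; then the compatibility $x_F \sset \resim\inv[x_I]$ yields $\neg b \in f\inv[x_I]$, and the second condition $y_I \rset f\inv[x_I]$ pins $\neg b$ inside $J$ in a way that, via dual self-adjointness of $\neg$, contradicts $a \not\leq b$.

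For part (2), the forward direction is direct: if $f$ is surjective and $\digamma(f)(x) \r S \digamma(f)(x')$, i.e., $f\inv[x'_F] \cap f\inv[x_I] = \emptyset$, then any hypothetical $b \in x'_F \cap x_I$ would equal $f(a)$ for some $a \in L$, placing $a$ in both preimages and giving a contradiction, so $x \r R x'$ holds. For the reverse direction, given $b \in M \setminus f[L]$, I construct $x, x' \in \digamma(M)$ with $b \in x_I$ and $b \in x'_F$, chosen so that no element of $f[L]$ simultaneously lies in $x'_F$ and $x_I$; then $b$ witnesses $x'_F \cap x_I \neq \emptyset$, so $\neg (x \r R x')$, yet $\digamma(f)(x) \r S \digamma(f)(x')$ holds, contradicting the strong-embedding property. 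The delicate step throughout is the reverse direction of part (1): crafting the ideal $J$ so that strong density is forced to break relies on the careful interaction of $\neg$ with filter/ideal compatibility, whereas the surjectivity analysis in (2) is cleaner because $b \notin f[L]$ is a direct combinatorial obstruction.
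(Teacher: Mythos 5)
Your treatment of part (1)($\Rightarrow$) and of both directions of part (2) matches the paper's proof essentially verbatim: the point $(\upset f[y_F], \dnset f[y_I])$ is exactly the paper's witness for strong density (the paper does not even bother with the properness check, which your injectivity argument supplies correctly), and the pair $x = (\{1\}, \dnset b)$, $x' = (\upset b, \dnset \resim b)$ for $b \notin f[L]$ is exactly the paper's witness against the strong embedding property. Those parts are fine.

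The gap is in part (1)($\Leftarrow$), precisely the step you flag as delicate. From the strong density witness $x$ at $y = (\upset a, J)$ you can extract only that $f(a) \in x_F$, hence $f(\neg b) = \resim f(b) \in x_I$, hence $\neg b \in f\inv[x_I] \sset J$. For this to be a contradiction you need an ideal $J$ compatible with $\upset a$ that omits $\neg b$. But compatibility forces $\neg a \in J$, and $J$ is downward closed, so every admissible $J$ contains $\dnset\neg a$; a contradiction is therefore available only when $\neg b \nleq \neg a$, i.e.\ (by dual self-adjointness) only when $a \nleq \neg\neg b$. This does \emph{not} follow from $a \nleq b$, since $\neg$ in a fundamental lattice is neither injective nor an involution in general. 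Concretely, let $L$ be the chain $0 < c < 1$ with $\neg 0 = 1$ and $\neg c = \neg 1 = 0$, let $M$ be the two-element Boolean algebra, and let $f$ send both $c$ and $1$ to $1$. Taking $a = 1$, $b = c$ gives $\neg b = \neg a = 0$, so $\neg b$ lies in every admissible $J$ and your contradiction never materializes; nor does any other choice of $y$ help, since any $y_F$ containing $a$ forces $\neg a \in y_I$ and hence $\neg b \in y_I$.

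The paper takes a different route here: it deduces injectivity of $f$ from injectivity of $\chi(\digamma(f))$, obtained from Lemma \ref{injlma} (via the claim that strong density implies density) together with the identity $\chi(\digamma(f))(\widehat{a}) = \widehat{f(a)}$ from Lemma \ref{canmorlma}; the strength there comes from quantifying over all pairs $y' \r S y$, not from a single witness at a single point. If you redo this direction you should follow that route, but be aware that in the example above $\digamma(f)$ satisfies the literal condition ``$\digamma(f)(x) \ple S y$ and $y \nle S \digamma(f)(x)$'' at every $y \in \digamma(L)$ while failing to be dense, so the implication from strong density to density needs the same care; what the forward direction of the lemma actually delivers is a point $x$ with $\digamma(f)(x)$ equal to $y$ in both coordinates, and it is that stronger conclusion that the reverse direction genuinely requires.
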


\begin{proof}
 We prove both items in turn. For convenience, we write $\r R$ for the relation on $\digamma(M)$ and $\r S$ for the relation on $\digamma(L)$. 

 \begin{enumerate}
     \item Assume first that $f$ is injective, and let $x \in \digamma(L)$. Let $y = (\upset f[x_F], \dnset f[x_I])$. Note that $f(a) \leq b$ for some $a \in F$ implies that $\resim b \leq \resim f(a) = f(\neg a)$, which shows that $b \in \upset f[x_F]$ implies $\resim b \in \dnset f[x_I]$. Hence $y \in \digamma(M)$. Moreover, since $f$ is injective, we have that $x_F = f\inv[\upset f[x_F]]$ and $x_I = f\inv[\upset f[x_I]]$, which shows that $y \ple S x$ and $x \nle S y$. Conversely, suppose that $\digamma(f)$ is strongly dense, and let $a,b \in L$ such that $a \nleq b$. Since strong density implies density, by Lemma \ref{injlma}, we have that $\chi\digamma(f)$ is injective. Moreover, $a \nleq b$ implies that $\widehat{a} \nsubseteq \widehat{b}$. Hence $\chi(\digamma(f))(\widehat{a}) \nsubseteq \chi(\digamma(f))(\widehat{b})$. But by Lemma \ref{canmorlma}, we have that $\chi(\digamma(f))(\widehat{a}) = \widehat{f(a)}$ and $\chi(\digamma(f))(\widehat{b}) = \widehat{f(b)}$, which means that $\widehat{f(a)} \nsubseteq \widehat{f(b)}$. But this implies that $f(a) \nleq f(b)$, establishing that $f$ is injective.

     \item Assume first that $f$ is surjective, and let $x, x' \in \digamma(M)$ be such that $\digamma(f)(x)\r S\digamma(f)(x')$, i.e., $f\inv[x_I] \cap f\inv[x'_F] = \emptyset$. Suppose towards a contradiction that there is $a \in x_I \cap x'_F$. Then $a = f(b)$ for some $b \in L$, so $b \in f\inv[x_I] \cap f\inv[x'_F]$, contradicting our assumption. Hence $x\r Rx'$. Conversely, suppose now that $f$ is not surjective, and let $a \in M$ be such that $f(b) \neq a$ for all $b \in L$. Note in particular that $a \neq 1$, so $x := ({1}, \dnset a)$ and $x':=(\upset a, \dnset \neg a)$ are points in $\digamma(M)$. Clearly we have $\neg x\r Rx'$. However, by choice of $a$, $f\inv[\dnset a] \cap f\inv[\upset a] = \emptyset$, from which it follows that $\digamma(f)(x)\r S \digamma(f)(x')$. Hence $\digamma(f)$ is not strongly dense.
 \end{enumerate}
\end{proof}

The previous results motivate the following definitions.

\begin{definition}
    Let $(X,\r R)$ and $(Y,\r S)$ be fundamental frames. Then $(X,\r R)$ is a \textit{subframe} of $(Y,\r S)$ if there is an embedding $h: (X,\r R) \to (Y,\r S)$, and it is a \textit{dense image} of $(Y,\r S)$ if there is a dense $f$-morphism $g: (Y,\r S) \to (X,\r R)$. Moreover, $(X,\r R)$ is a \textit{strong subframe} (resp. a \textit{strongly dense image}) of $(Y,\r S)$ if $h$ is a strong embedding (resp. $g$ is strongly dense).
\end{definition}

\begin{lemma} \label{duallma}
    Let $(X,\r R)$ and $(Y,\r S)$ be two fundamental frames. Then:
    
    \begin{itemize}
\item  If $(X,\r R)$ is a subframe of $(Y,\r S)$, then $\pchi R(X)$ is a homomorphic image of $\pchi S(Y)$;
\item If $(X,\r R)$ is a dense image of $(Y, \r S)$, then $\pchi R(X)$ is a subalgebra of $\pchi S(Y)$. 
\end{itemize}

Moreover, for any two fundamental lattices $(L, \neg)$ and $(M, \resim)$:

\begin{itemize}
    \item If $(L, \neg)$ is a subalgebra of $(M, \resim)$, then $\digamma(L)$ is a strongly dense image of $\digamma(M)$;
    \item If $(L, \neg)$ is a homomorphic image of $(M, \resim)$, then  $\digamma(L)$ is a strong subframe of $\digamma(M)$.
    \end{itemize}
\end{lemma}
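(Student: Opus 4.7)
The plan is to observe that all four items of \Cref{duallma} are essentially corollaries of the lemmas already established in this section, obtained by unpacking definitions and invoking the appropriate duality result.

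For item 1, I would start from the assumption that $(X,\r R)$ is a subframe of $(Y,\r S)$, which by definition gives an embedding $h: (X,\r R) \to (Y,\r S)$. Applying \Cref{surjlma} directly yields that $\chi(h): \pchi S(Y) \to \pchi R(X)$ is a surjective fundamental lattice homomorphism (it is a homomorphism by \Cref{fmorlma}), so $\pchi R(X)$ is exhibited as a homomorphic image of $\pchi S(Y)$. Item 2 is symmetric: a dense $f$-morphism $g:(Y,\r S) \to (X,\r R)$ yields, via \Cref{injlma}, an injective fundamental lattice homomorphism $\chi(g): \pchi R(X) \to \pchi S(Y)$, realising $\pchi R(X)$ as a subalgebra of $\pchi S(Y)$.

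For items 3 and 4, the argument proceeds in the opposite direction, starting from a fundamental lattice homomorphism and dualising it via $\digamma$. If $(L,\neg)$ is a subalgebra of $(M,\resim)$, pick an injective fundamental lattice homomorphism $f:(L,\neg) \to (M,\resim)$; by \Cref{canmorlma}, $\digamma(f): \digamma(M) \to \digamma(L)$ is an $f$-morphism, and by \Cref{replma2}(1) it is strongly dense, so $\digamma(L)$ is a strongly dense image of $\digamma(M)$. Likewise, if $(L,\neg)$ is a homomorphic image of $(M,\resim)$, pick a surjective fundamental lattice homomorphism $f:(M,\resim) \to (L,\neg)$ and apply \Cref{replma2}(2) to $\digamma(f): \digamma(L) \to \digamma(M)$ to conclude that it is a strong embedding, so $\digamma(L)$ is a strong subframe of $\digamma(M)$.

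Because every ingredient is available, there is no real obstacle here; the proof is a bookkeeping exercise of matching directions of morphisms to the directions of the subframe/dense-image and subalgebra/homomorphic-image relations. The one point I would be careful about is keeping the arrows straight: subframes and strong subframes are witnessed by embeddings going \emph{into} the larger frame, while dense images and strongly dense images are witnessed by dense $f$-morphisms going \emph{out of} the larger frame, and the contravariance of $\digamma$ and $\chi$ flips these when passing to and from the algebraic side. Once the directions are set, each item is a one-line citation of the relevant preceding lemma.
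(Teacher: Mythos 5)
Your proposal is correct and matches the paper's own proof, which likewise disposes of the first two items by direct appeal to Lemmas~\ref{injlma} and~\ref{surjlma} and of the last two by Lemma~\ref{replma2}. The only substantive content is the arrow-direction bookkeeping you spell out, and you get all four directions right.
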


\begin{proof}
    The first part of the lemma follows directly from Lemmas \ref{injlma} and \ref{surjlma}, and the second part is an immediate consequence of Lemma \ref{replma2}.
\end{proof}

We conclude this section by identifying the frame-theoretic notion that corresponds to products of fundamental lattices. Unsurprisingly, this is given by a disjoint union construction.

\begin{definition}
    Let $\{(X_i,\r R_i)\}_{i \in I}$ be a family of fundamental frames. The \textit{coproduct} of the family $\{(X_i,\r R_i)\}_{i \in I}$ is the relation frame $(X_I, \r R_I)$, where $X_I$ is the disjoint union of the sets $X_i$, and the relation $\r R_I$ is given by $(i,x)\r R_I(i',x')$ iff $i = i'$ and $x\r R_ix'$ for any $x \in X_i$, $x' \in X_{i'}$.
\end{definition}

It is straightforward to verify that the coproduct of any family of fundamental frames is also a fundamental frame. In fact, the dual fundamental lattice of the coproduct of a family of frames is easily seen to be isomorphic to the product of the corresponding family of fundamental lattices.

\begin{lemma} \label{prodlma}
    For any family $\{(X_i,\r R_i)\}_{i \in I}$ of fundamental frames with coproduct $(X_I, \r R_I)$, $\chi_{\r R_I}(X_I)$ is isomorphic to $\prod_{i \in I} \chi_{\r R_i}(X_i)$.
\end{lemma}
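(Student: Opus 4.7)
The plan is to exhibit the isomorphism $\Phi : \chi_{\r R_I}(X_I) \to \prod_{i \in I} \chi_{\r R_i}(X_i)$ given by $\Phi(A) = (A \cap X_i)_{i \in I}$, where each $X_i$ is identified with its copy in the disjoint union. The whole proof is driven by one locality observation: by definition of $\r R_I$, any edge $(j,y)\,\r R_I\,(i,x)$ forces $j = i$, so the relation $\r R_I$ decomposes as the disjoint union of the $\r R_i$. Unfolding the definitions of $\pneg{R_I}$, $\nneg{R_I}$ and $C_{\r R_I}$ and restricting to $X_i$ then yields, for every $A \sset X_I$,
\[
\pneg{R_I} A \cap X_i = \pneg{R_i}(A \cap X_i), \quad \nneg{R_I} A \cap X_i = \nneg{R_i}(A \cap X_i),
\]
and therefore $C_{\r R_I}(A) \cap X_i = C_{\r R_i}(A \cap X_i)$.

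From this it follows immediately that $A \in \chi_{\r R_I}(X_I)$, i.e.\ $A = C_{\r R_I}(A)$, iff $A \cap X_i \in \chi_{\r R_i}(X_i)$ for every $i \in I$. This ensures that $\Phi$ is well-defined and that the map sending $(B_i)_{i \in I}$ to $\bigsqcup_{i \in I} B_i$ is a two-sided inverse; injectivity of $\Phi$ is trivial, since any $A \sset X_I$ is determined by its slices $A \cap X_i$.

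It then remains to check that $\Phi$ is a fundamental lattice homomorphism. Meet preservation is trivial because meets in both algebras are intersections. For joins, use that $A \jo B = C_{\r R_I}(A \cup B)$ in $\pchi{R_I}(X_I)$, together with the decomposition of $C_{\r R_I}$ above, to conclude $(A \jo B) \cap X_i = (A \cap X_i) \jo (B \cap X_i)$. Preservation of $\pneg{R_I}$ follows from the same locality observation, and preservation of $0$ and $1$ reduces to $\emptyset \cap X_i = \emptyset$ and $X_I \cap X_i = X_i$.

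The only step requiring a moment's care is the decomposition of $\pneg{R_I}$, $\nneg{R_I}$ and $C_{\r R_I}$; but this is a direct, purely combinatorial unwinding of the definitions, made possible by the fact that $\r R_I$ has no cross-component edges. No delicate argument is needed, so the result is essentially routine once the locality observation is made explicit.
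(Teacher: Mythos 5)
Your proof is correct and follows essentially the same route as the paper's: both rest on the observation that $\r R_I$ has no cross-component edges, so $C_{\r R_I}$ (and hence membership in $\chi_{\r R_I}(X_I)$) decomposes componentwise, making the slicing map a bijection onto the product. You verify the homomorphism conditions explicitly where the paper instead notes that the bijection preserves and reflects the inclusion order; this is only a presentational difference.
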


\begin{proof}
    Define the map $f: \prod_{i \in I} X_i  \to X_I$ by $f(\{A_i\}_{i \in I}) = \bigcup_{i \in I} (i, A_i)$, where $(i,A_i) = \{(i, x) \in X_I \mid x \in A_i\}$. Since all relations $\r R_i$ are disjoint from one another, it is easy to check that for any $A \sset X_I$, $A \in \chi_{\r R_I}(X_I)$ iff $\{x \in X_i \mid (i, x) \in A\} \in \chi_{\r R_i}(X_i)$ for every $i \in I$. This shows that the image of the restriction of $f$ to $\prod_{i \in I} \chi_{\r R_i}(X_i)$ is exactly $\chi_{\r R_I}(X_I)$. Moreover, $f$ clearly preserves and reflects the inclusion order, and hence its restriction to $\prod_{i \in I} \chi_{\r R_i}(X_i)$ is an isomorphism. This completes the proof.
\end{proof}

 \section{The Goldblatt-Thomason Theorem} \label{section4}

 In this section, we prove our main result, namely, a version of the Goldblatt-Thomason Theorem for Fundamental Logic. In the original setting of modal logic, ultrafilter extensions play a central role in bridging the gap between algebraic and relational structures. In our setting, a similar role is played by filter extensions, to which we now turn.

 \begin{definition}
     Let $(X,\r R)$ be a fundamental frame. The \textit{filter extension} of $(X,\r R)$ is the fundamental frame $\digamma(\chi_R(X))$. 
 \end{definition}

 The filter extension of a fundamental frame $(X,\r R)$ can be characterized more concretely as follows. Points are pairs $(F,I)$ such that $F$ is a proper filter on the positive algebra $\pchi R(X)$ and $I$ is a proper filter on the negative algebra $\nchi R(X)$ such that $A \in F$ implies $\nneg R \pneg R A \in I$. Moreover, for any two such points $(F,I), (G,J)$, we let $(F,I)\r S(G,J)$ iff there is no $A \in G$ such that $\nneg R A \in I$. Note that, since $\pneg R$ and $\nneg R$ are inverse anti-isomorphisms between $\nchi R(X)$ and $\pchi R(X)$, this is equivalent to taking exactly the points in the canonical frame of $\pchi R (X)$.\\

 It is worth commenting on the relationship between the positive algebra of a fundamental frame and the positive algebra of its filter extension. As it turns out, the latter is the $\pi$-canonical extension of the former (see \cite{Dunn} for the general definition of $\pi$-canonical extensions of lattice expansions). This follows from the following general fact about fundamental lattices.

 \begin{lemma} \label{canlma}
     Let $(L, \neg)$ be a fundamental lattice. Then $\chi(\digamma(L))$ is the $\pi$-canonical extension of $L$, as witnessed by the embedding $\widehat{\cdot} : L \to \chi(\digamma(L))$.
 \end{lemma}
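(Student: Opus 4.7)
The plan is to verify the defining properties of the $\pi$-canonical extension of $(L, \neg)$ witnessed by the embedding $\widehat{\cdot}: L \to \chi(\digamma(L))$. Concretely: (i) $\chi(\digamma(L))$ is a complete lattice, which is immediate since $\pchi R(\digamma(L))$ is the image of the closure operator $C_{\r R}$; (ii) density, i.e., every element is both a join of filter elements $\widetilde F := \bigcap_{a \in F} \widehat a$ and a meet of ideal elements $\widehat I := C_{\r R}(\bigcup_{b \in I} \widehat b)$ as $F$ and $I$ range over proper filters and ideals of $L$; (iii) compactness, i.e., $\widetilde F \sset \widehat I$ forces $F \cap I \neq \emptyset$; and (iv) $\pneg R$ on $\chi(\digamma(L))$ coincides with the $\pi$-extension of $\neg$.

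For the filter-side half of (ii), I would fix $A \in \chi(\digamma(L))$ and $(F, I) \in A$. By Fact \ref{facts}$(vi)$ and the identification of $\ple R$ in $\digamma(L)$ with converse inclusion on filter components, any $(F', I') \in \digamma(L)$ with $F \sset F'$ satisfies $(F', I') \ple R (F, I)$ and hence belongs to $A$. Therefore $\widetilde F \sset A$, and $A$ is the join (in $\chi(\digamma(L))$) of the filter elements it contains. For the ideal-side half, given $(F_0, I_0) \notin A$, Fact \ref{facts}$(i)$ supplies a point $(F_1, I_1) \r R (F_0, I_0)$ such that every $(F_2, I_2) \reef R (F_1, I_1)$ lies outside $A$; since such $(F_2, I_2)$ are exactly the pairs with $F_2 \cap I_1 = \emptyset$, it follows that $F \cap I_1 \neq \emptyset$ for every $(F, I) \in A$. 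This immediately yields $A \sset \widehat{I_1}$ (each $(F, I) \in A$ serves as a $\reef R$-successor of any of its own $\r R$-predecessors that witnesses $F \cap I_1 \neq \emptyset$), while the same $(F_1, I_1)$ is a $\r R$-predecessor of $(F_0, I_0)$ all of whose $\reef R$-successors have filter disjoint from $I_1$, giving $(F_0, I_0) \notin \widehat{I_1}$. Hence $A = \bigcap\{\widehat I : A \sset \widehat I\}$.

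For compactness, I argue contrapositively: assume $F \cap I = \emptyset$ for proper $F$ and $I$, and let $J$ be the ideal generated by $\{\neg a : a \in F\}$. Antitonicity of $\neg$ (which follows from dual self-adjointness and $a \leq \neg \neg a$) shows every element of $J$ lies below $\neg a$ for some $a \in F$, and $\neg 1 = 0$ combined with the properness of $F$ forces $1 \notin J$, so $(F, J) \in \digamma(L) \cap \widetilde F$. The pair $(\{1\}, I)$ lies in $\digamma(L)$ (since $0 \in I$) and $\r R$-precedes $(F, J)$ because $F \cap I = \emptyset$; any $(G, K)$ with $(\{1\}, I) \r R (G, K)$ satisfies $G \cap I = \emptyset$ and so fails to belong to $\bigcup_{b \in I} \widehat b$. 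This witnesses $(F, J) \notin \widehat I$, and hence $\widetilde F \not\sset \widehat I$. Finally, for (iv), the identity $\pneg R \widehat a = \widehat{\neg a}$ gives $\pneg R \widetilde F = C_{\r R}(\bigcup_{a \in F} \widehat{\neg a})$, which I would check matches the $\pi$-extension formula on closed elements; density from (ii) then propagates agreement to all of $\chi(\digamma(L))$.

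The main obstacle I anticipate is the ideal-side density in (ii): unpacking Fact \ref{facts}$(i)$ at a non-member point to extract a suitable separating ideal, and verifying the self-witnessing argument that $A \sset \widehat{I_1}$, requires careful bookkeeping with the interplay between $\r R$, $\reef R$, and the specific definition of the canonical relation. Compactness and the verification of the $\pi$-extension formula, by contrast, reduce to direct computations once antitonicity of $\neg$ and the identity $\pneg R \widehat a = \widehat{\neg a}$ are in hand.
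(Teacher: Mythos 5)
Your overall architecture matches the paper's: completeness, double density via filter elements $\widetilde F=\bigcap_{a\in F}\widehat a$ and ideal elements $\bigjo_{b\in I}\widehat b$, compactness, and the identification of $\pneg R$ with $\neg^\pi$. Your ideal-side density argument is in fact a genuinely different (and arguably more self-contained) route than the paper's: the paper establishes the dual statement in the negative algebra $\nchi R(X)$ and transfers it through the anti-isomorphism $\pneg R$, whereas you extract the separating ideal $I_1$ directly from the witness to $(F_0,I_0)\notin C_{\r R}(A)$ supplied by Fact \ref{facts}(i). That argument is correct, as is your compactness argument (which is slightly more careful than the paper's, since you use the two points $(F,J)$ and $(\{1\},I)$ rather than assuming $(F,I)$ itself is a point of $\digamma(L)$, which would require $F\sset\neg\inv[I]$).

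The genuine gap is in step (iv). The identity $\pneg R\widehat a=\widehat{\neg a}$ together with antitonicity of $\pneg R$ only yields $\pneg R\widetilde F\rset\widehat{\neg a}$ for each $a\in F$, hence $\pneg R\widetilde F\rset\bigjo_{a\in F}\widehat{\neg a}$. The converse inclusion $\pneg R\widetilde F\sset\bigjo_{a\in F}\widehat{\neg a}$ is precisely the nontrivial content of ``$\pneg R$ is the $\pi$-extension of $\neg$ and not something strictly larger,'' and it does not reduce to a direct computation from that identity: since $\pneg R$ restricted to $\pchi R(X)$ is not an involution, there is no general reason for it to turn the intersection $\bigcap_{a\in F}\widehat a$ into the join of the $\widehat{\neg a}$. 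Proving it requires constructing explicit witness points of $\digamma(L)$ — e.g., given $y$ with $y\notin\bigjo_{a\in F}\widehat{\neg a}$, one must produce a point of the form $(F,\dnset\neg[F])$ that is $\r R$-related to $y$ and lies in $\widetilde F$ (this is the analogue of the paper's point $z=(x_F,\neg[x_F])$ and the surrounding argument, which occupies the final paragraph of the paper's proof). Your proposal both defers this verification and misdiagnoses it as routine; by contrast, the ideal-side density you single out as the main obstacle is the part you have essentially already completed. The propagation from closed elements to arbitrary elements via $A=\bigcup_{x\in A}\widetilde{x_F}$ and $\pneg R(\bigcup_i A_i)=\bigcap_i\pneg R(A_i)$ is fine once the closed-element case is actually established.
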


  \begin{proof}
     Recall first that the canonical extension of a lattice $L$ is characterized up to isomorphism as a complete lattice $L$ and an embedding $\alpha: L \to C$ with the following properties:

     \begin{itemize}
         \item $L$ is doubly-dense in $C$: for any $A \in C$, there are families $\{I_k\}_{k \in K}$ and $\{J_h\}_{h \in H}$ of subsets of $L$ such that $\bigme_C \{\bigjo_C \{\alpha(a) \mid a \in I_k \mid k \in K\} = A = \bigjo_C \{\bigme_C \{\alpha(b) \mid b \in J_h \mid h \in H\}$;
         \item $L$ sits compactly inside of $C$: for any $A, B \sset L$ such that $\bigme_C \{\alpha(a) \mid a \in A\} \leq_C \bigjo_C\{\alpha(b) \mid b \in B\}$, there are finite sets $A' \sset A$ and $B'\sset B$ such that $\bigme_L A' \leq_L \bigjo_L B'$.
     \end{itemize}

     Let $\digamma(L) = (X,\r R)$. We prove that the embedding $\widehat{\cdot} : L \to \pchi R(L)$ satisfies these two properties. Let $A \in \pchi R (L)$. First, we claim that $A = \bigcup_{x \in A} \bigcap_{a \in x_F} \widehat{a}$. Since $\pneg R \nneg R A = A$, this will establish that $A = \bigjo_{x \in A} \bigme_{a \in x_F} \widehat{a}$. For the proof of the claim, note that the left-to-right inclusion is immediate. For the converse, let $x \in X$ and suppose that there is $x' \in A$ such that $x' \in \bigcap_{a \in x_F} \widehat{a}$. Then $x' \ple R x$, so, since $A \in \pchi R$, it follows from Fact \ref{facts}.(vi) that $x' \in A$. This shows that any element in $\pchi R(X)$ is a join of meets of images of elements of $L$. To show that it is also a meet of joins of images of elements of $L$, note first that a completely similar argument shows that, for any $B \in \nchi R (X)$, $B = \bigjo_{\nchi R (X)} \{ \bigme_{\nchi R(X)} \{ \widecheck{b} \mid b \in x_I\} \mid x \in B\}$, where $\widecheck{b} = \{x \in X \mid b \in x_I\}$. Now for any $A \in \pchi R (X)$, there is $B \in \nchi R (X)$ such that $A = \pneg R B$. Fix $A \in \pchi R (X)$ and such a $B$. Since $\pneg R$ is an anti-isomorphism, it follows that $A = \bigme_{ \pchi R(X)} \{ \bigjo_{\pchi R (X)} \{ \pneg R \widecheck{b} \mid b \in x_I\} \mid x \in B\}$. Since $\pneg R \widecheck{b} = \widehat{b}$ for any $b \in L$, this completes the proof that $L$ is doubly-dense in $\pchi R (X)$.

     Let us now prove that $L$ sits compactly inside of $\pchi R (X)$. Let $A, B \sset L$ such that for any finite $A' \sset A$, $B' \sset B$, $\bigme_L A' \nleq_L \bigjo_L B'$. This means that there exist $F$ and $I$, respectively a filter and an ideal on $L$, such that $A \sset F$, $B \sset I$ and $F \cap I = \emptyset$. Let $x \in X$ be the point $(F,I)$. Note that $x \in \bigcap_{a \in A} \widehat{a}$. However, $x \r R x$, and for any $y \in X$ such that $x \r R y$, $y \notin \bigcup_{b \in B} \widehat{b}$, since otherwise $I \cap y_F \neq \emptyset$, contradicting $x \r R y$. Hence $x \notin \pneg R \nneg R (\bigcup_{b \in B} \widehat{b})$, which shows the compactness property.

     To complete the proof that $(\pchi R (\digamma(L), \pneg R)$ is the $\pi$-canonical extension of $(L,\neg)$, it remains to show that $\pneg R$ is the $\pi$-extension of $\neg$. Recall that, for $f: L \to L$ an antitone map, its $\pi$-extension $f^\pi:L^\sigma \to L^\sigma$ is given by:
     \[f^\pi(a) = \bigme \{\bigjo_{b \in F} f(b) \mid F \sset L:\text{ is a filter and }\bigme F \leq a\} \]
     for any $a \in L^\sigma$.

     In our setting this means that we must show that for any $A \in \pchi R (\digamma(L))$, $\pneg R A = \bigcap_{x \in A} \bigjo_{a \in x_F} \widehat{\neg a}$. For the left-to-right direction, fix some $y \in \pneg R A$ and some $x \in A$. It is enough to show that $\neg[x_F] \cap y_I \neq \emptyset$, where $\neg[x_F] = \{\neg a \mid a \in x_F\}$. Suppose towards a contradiction that $\neg[x_F] \cap y_I = \emptyset$. Then consider the pair $z=(x_F,\neg[x_F]) \in \digamma(L)$, and notice that $z\r Ry$. But $z \ple R x$, so $z \in A$. This contradicts $y \in \pneg R A$. For the converse direction, suppose that $y \notin \pneg R A$. Then there is $x \in A$ such that $x\r R y$. We claim that for all $z \reef{R} x$ and for all $a \in x_F$, $z \notin \widehat{\neg a}$. Indeed, if there is $a \in x_F$ such that $z \in \widehat{\neg a}$, then $z_F \cap x_I \neq \emptyset$, so $\neg x \r R z$. But this means that $x \in \nneg R \bigcup_{a \in x_F} \widehat{\neg a}$, hence $y \notin \pneg R \nneg R \bigcup_{a \in x_F} \widehat{\neg a} = \bigjo_{a \in x_F} \widehat{\neg a}$. This completes the proof.
     
 \end{proof}

The following is a standard definition in the literature on Goldblatt-Thomason theorems.

 \begin{definition}
     A class $\mathfrak{K}$ of fundamental frames is \textit{axiomatic} if there is a set $\Gamma$ of formulas of $FL$ such that $\mathfrak{K} = Mod(\Gamma)$.
 \end{definition}

 Note that a class $\mathfrak{K}$ is axiomatic iff $\mathfrak{K} \rset Mod(Log(\mathfrak{K}))$, as an easy argument shows. The following is a routine fact.

 \begin{lemma} \label{gtlma1}
     Let $\mathfrak{K}$ be an axiomatic class of fundamental frames. Then $\mathfrak{K}$ is closed under subframes, dense images and coproducts, and it reflects filter extensions.
 \end{lemma}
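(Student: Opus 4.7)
The plan is to exploit the algebraic duality developed in Section \ref{section3}, combined with the observation that validity of a formula at a fundamental frame corresponds (via Theorem \ref{sndthm}) to validity of the corresponding ``equation'' $\phi = \top$ in its positive algebra under all valuations. Such universal conditions are well known to be preserved by subalgebras, homomorphic images and products of (fundamental) lattices, by a direct verification: valuations compose with homomorphisms, project through products, and lift along surjections. Each of the four required properties will therefore reduce to dualising the corresponding algebraic closure property from positive algebras back to frames.

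Concretely, for subframes and dense images I would read the algebraic side off of Lemma \ref{duallma}: a subframe of $(Y,\r S) \in \mathfrak{K}$ has a positive algebra that is a homomorphic image of $\pchi S(Y)$, and a dense image of $(Y,\r S) \in \mathfrak{K}$ has a positive algebra that is a subalgebra of $\pchi S(Y)$. In both cases, validity of $\Gamma$ transfers from $\pchi S(Y)$ to the new positive algebra, hence from $(Y,\r S)$ to the new frame. For coproducts, Lemma \ref{prodlma} identifies the positive algebra of the coproduct as the product of the summand positive algebras, and preservation of equational validity under products closes the argument.

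The reflection of filter extensions is the only point that does not follow directly from Section \ref{section3}, and it is here that Lemma \ref{canlma} is essential. By that lemma, $\chi(\digamma(\pchi R(X)))$ is the $\pi$-canonical extension of $\pchi R(X)$, and the Stone-like map $\widehat{\cdot}$ furnishes a fundamental lattice embedding of $\pchi R(X)$ into it. So if $\digamma(\pchi R(X)) \in \mathfrak{K}$, its positive algebra validates $\Gamma$, and therefore so does its subalgebra $\pchi R(X)$, which puts $(X,\r R)$ in $\mathfrak{K}$.

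I do not anticipate any real obstacle: the whole argument is a bookkeeping exercise that feeds Lemmas \ref{duallma}, \ref{prodlma} and \ref{canlma} into the familiar HSP-style preservation facts for equationally described classes of algebras. The only subtlety is to remember that the algebraic embedding needed in the filter-extension case comes from Lemma \ref{canlma} and the canonical map $\widehat{\cdot}$, rather than from an $f$-morphism supplied by Section \ref{section3}; no $f$-morphism between $(X,\r R)$ and $\digamma(\pchi R(X))$ is directly at hand, but the algebraic embedding is all the argument requires.
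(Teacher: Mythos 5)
Your proposal is correct and follows exactly the paper's own argument: transfer validity of $\Gamma$ through Lemma \ref{duallma} for subframes and dense images, through Lemma \ref{prodlma} for coproducts, and through the canonical-extension embedding of Lemma \ref{canlma} for reflection of filter extensions. No substantive differences to report.
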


 \begin{proof}
     This is a routine argument. Suppose that $\mathfrak{K} = Mod(\Gamma)$ for some set $\Gamma$ of formulas of $\mathcal{L}$, and let $(X,\r R)$ and $(Y, \r S)$ be fundamental frames. If $(X,\r R)$ is a subframe (resp. dense image) of $(Y,\r S)$, then $\pchi R(X)$ is a homomorphic image of (resp. embeds into) $\pchi S(Y)$ by Lemma \ref{duallma}. Now if $(Y,\r S)$ is in $\mathfrak{K}$, then $\Gamma$ is valid on $\pchi S(Y)$. But then $\Gamma$ is also valid on any subalgebra or homomorphic image of $\pchi S(Y)$. This shows that $\mathfrak{K}$ is closed under subframes and dense images.
     
     Moreover, if $\{(X_i,\r R_i)\}_{i \in I}$ is a family of fundamental frames in $\mathfrak{K}$, then $\Gamma$ is valid on $\chi_{\r R_i} (X_i)$ for any $i \in I$, hence also on $\prod_{i \in I} \chi_{\r R_i} (X_i)$. By Lemma \ref{prodlma}, the latter is isomorphic to $\chi_{\r R_I}(X_I)$, the positive algebra of the coproduct $(X_I, \r R_I)$ of the family $\{(X_i,\r R_i)\}_{i \in I}$. Hence $(X_I, \r R_I) \in \mathfrak{K}$, which shows that $\mathfrak{K}$ is closed under coproducts. 
     
     Finally, assume that $(X,\r R)$ is a fundamental frame such that its filter extension $\digamma(\pchi R(X))$ is in $\mathfrak{K}$. This means that $\Gamma$ is valid on the positive algebra of $\digamma(\pchi R(X))$. But by Lemma \ref{canlma}, the latter is the canonical extension of $\pchi R (X)$, hence $\pchi R (X)$ embeds into it. It follows that $\Gamma$ is valid on $\pchi R (X)$, and therefore $(X,\r R) \in \mathfrak{K}$. Hence $\mathfrak{K}$ reflects filter extensions.
 \end{proof}

Our key lemma towards a Goldblatt-Thomason theorem is the following.

 \begin{lemma} \label{gtlma2}
     Let $\mathfrak{K}$ be a class of fundamental frames closed under filter extensions. If $\mathfrak{K}$ is closed under strong subframes, strongly dense images and coproducts, and it reflects filter extensions, then $\mathfrak{K}$ is axiomatic.
 \end{lemma}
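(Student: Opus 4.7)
The plan is to follow the classical Goldblatt-Thomason strategy by showing $\mathfrak{K} \rset Mod(Log(\mathfrak{K}))$; the other inclusion is automatic from the Galois connection. Fix an arbitrary $(X,\r R) \in Mod(Log(\mathfrak{K}))$ and let $L := \pchi R(X)$. The overall idea is to manoeuvre $\digamma(L)$ into $\mathfrak{K}$ using the closure hypotheses and then conclude by reflection of filter extensions, since $\digamma(L)$ is by definition the filter extension of $(X,\r R)$.

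The algebraic engine is a Birkhoff-style HSP argument. I would first observe that fundamental lattices form an equational variety in the signature of bounded lattices with a unary $\neg$: the lattice axioms together with $a \me \neg a = 0$, $a \me \neg\neg a = a$, and $\neg(a \jo b) = \neg a \me \neg b$ (which gives antitonicity) suffice, since dual self-adjointness is then recovered from $a \leq \neg\neg a$ and antitonicity, namely $a \leq \neg b$ implies $\neg\neg b \leq \neg a$ and hence $b \leq \neg\neg b \leq \neg a$. Because $(X,\r R) \in Mod(Log(\mathfrak{K}))$, the algebra $L$ satisfies every equation of the form $\phi = 1$ that holds in every $\chi_{\r R_i}(X_i)$ with $(X_i,\r R_i) \in \mathfrak{K}$; Birkhoff's theorem then supplies a family $\{(X_i,\r R_i)\}_{i \in I} \sset \mathfrak{K}$, a subalgebra $N$ of the product $P := \prod_{i \in I} \chi_{\r R_i}(X_i)$, and a surjective fundamental homomorphism $\pi : N \twoheadrightarrow L$.

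Next I would translate this HSP decomposition back to the frame side using Section 3. By Lemma \ref{prodlma}, $P \cong \chi_{\r R_I}(X_I)$ for the coproduct $(X_I,\r R_I)$, so $(X_I,\r R_I) \in \mathfrak{K}$ by closure under coproducts and therefore $\digamma(P) \in \mathfrak{K}$ by closure under filter extensions. Since $N$ is a subalgebra of $P$, Lemma \ref{duallma} gives that $\digamma(N)$ is a strongly dense image of $\digamma(P)$, hence $\digamma(N) \in \mathfrak{K}$ by closure under strongly dense images. Since $L$ is a homomorphic image of $N$, Lemma \ref{duallma} again gives that $\digamma(L)$ is a strong subframe of $\digamma(N)$, hence $\digamma(L) \in \mathfrak{K}$ by closure under strong subframes. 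Finally, $\digamma(L)$ being the filter extension of $(X,\r R)$, reflection of filter extensions delivers $(X,\r R) \in \mathfrak{K}$, as required.

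The main obstacle is twofold. First, one has to justify applying Birkhoff's HSP to the class defined by ``validities'' $\phi = 1$ rather than by arbitrary lattice equations; the point is that such a class is still axiomatized by equations in the algebraic signature (each $\phi = 1$ is itself an equation), so it genuinely is a variety containing the $\chi_{\r R_i}(X_i)$'s and $L$, and HSP applies inside it. Second, since the duality between $\mathbf{FL}$ and $\mathbf{FFrm}$ is contravariant, the chain $L \twoheadleftarrow N \hookrightarrow P$ must be unpacked on the frame side in exactly the reverse order --- coproduct, then filter extension, then strongly dense image, then strong subframe --- so that the endpoint of the construction is precisely $\digamma(L)$, putting reflection of filter extensions in a position to close the loop and return $(X,\r R)$ itself to $\mathfrak{K}$.
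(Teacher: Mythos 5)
Your proof is correct and follows essentially the same route as the paper's: Birkhoff's HSP theorem to write $\pchi R(X)$ as a homomorphic image of a subalgebra of a product of positive algebras of frames in $\mathfrak{K}$, then Lemmas \ref{prodlma} and \ref{duallma} to dualize this chain in the right order (coproduct, filter extension, strongly dense image, strong subframe), ending at $\digamma(\pchi R(X))$, which reflection of filter extensions returns to $(X,\r R)$. The only loose point is your gloss on why $\pchi R(X)$ lies in $\mathbb{HSP}(\{\pchi S(Y)\mid (Y,\r S)\in\mathfrak{K}\})$ rather than merely in the possibly larger variety axiomatized by $\{\phi = 1\mid \phi\in Log(\mathfrak{K})\}$ --- membership in a variety containing the generators is not the same as membership in the variety they generate --- but the paper asserts this step with no more justification than you give.
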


 \begin{proof}
     Suppose $\mathfrak{K}$ satisfies the conditions of the lemma and let $\Gamma = Log(\mathfrak{K})$. Clearly $\mathfrak{K} \sset Mod(\Gamma)$, so we only need to show the converse. So assume that $(X,\r R)$ is a fundamental frame such that $\Gamma$ is valid on $(X,\r R)$. By Birkhoff's HSP theorem, it follows that $\pchi R (X) \in \mathbb{HSP}(\{\pchi S(Y) \mid (Y,\r S) \in \mathfrak{K}\})$. This means that there is a fundamental lattice $(L,\neg)$ and a family $\{(X_i,\r R_i)\}_{i \in I}$ of frames in $\mathfrak{K}$ such that $\pchi R (X)$ is a homomorphic image of $(L, \neg)$, and $(L,\neg)$ is a subalgebra of $\prod_{i \in I} \pchi {\r R_i} (X_i)$. By Lemma \ref{prodlma}, $(L, \neg)$ embeds into $\pchi {\r R_I} (X_I)$, where $(X_I, \r R_I)$ is the coproduct of the family $\{(X_i,\r R_i)\}_{i \in I}$. By Lemma \ref{duallma}, it follows that $\digamma(L)$ is a strongly dense image of $\digamma (\pchi {R_I} (X_I))$ and that $\digamma(\pchi R(X))$ is a strong subframe of $\digamma(L)$, as shown in the diagram below. 
     \vspace{0.5em}
     
      \adjustbox{width=0.8\textwidth,center}{%
 \begin{tikzcd}
	L && {\chi_{\r R_I}(X_I)} && {\digamma(L)} && {\digamma(\chi_{\r R_I}(X_I))} \\
	\\
	{\chi_{\r R}(X)} &&&& {\digamma(\chi_{\r R}(X))}
	\arrow[hook, from=1-1, to=1-3]
	\arrow[two heads, from=1-1, to=3-1]
	\arrow[two heads, from=1-7, to=1-5]
	\arrow[hook, from=3-5, to=1-5]
\end{tikzcd}}
\vspace{0.5em}

Now since $\mathfrak{K}$ is closed under coproducts and filter extensions, it follows that $\digamma (\pchi {R_I} (X_I)) \in \mathfrak{K}$. Since $\mathfrak{K}$ is closed under strongly dense images, $\digamma(L) \in \mathfrak{K}$, and since $\mathfrak{K}$ is closed under strong subframes, $\digamma(\pchi R(X)) \in \mathfrak{K}$. Finally, since $\mathfrak{K}$ reflects filter extensions, we can conclude that $(X,\r R) \in \mathfrak{K}$. Hence $\mathfrak{K}$ is axiomatic.
 \end{proof}

Recall that we identified in the previous section two distinct notions of subframes and two distinct notions of dense images. In the case of axiomatic classes, we can see that the two notions coincide in a sense spelled out by our main result, which we are now in a position to prove.

 \begin{theorem} \label{mainthm}
     Let $\mathfrak{K}$ be a class of fundamental frames closed under filter extensions. Then the following are equivalent:

     \begin{enumerate}
         \item $\mathfrak{K}$ is axiomatic;
         \item $\mathfrak{K}$ is closed under subframes, dense images and coproducts, and it reflects filter extensions;
         \item $\mathfrak{K}$ is closed under strong subframes, strongly dense images and coproducts, and it reflects filter extensions. 
     \end{enumerate}
\end{theorem}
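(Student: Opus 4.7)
The plan is to prove the cyclic chain of implications $(1) \Rightarrow (2) \Rightarrow (3) \Rightarrow (1)$. Two of the three are already in hand: $(1) \Rightarrow (2)$ is exactly Lemma~\ref{gtlma1}, and $(3) \Rightarrow (1)$ is exactly Lemma~\ref{gtlma2}. All that remains is $(2) \Rightarrow (3)$.

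For $(2) \Rightarrow (3)$ I would verify that every strong subframe is a subframe and every strongly dense image is a dense image; the coproduct and filter-extension-reflection clauses are identical between the two conditions, so they transfer for free. Once these two inclusions of notions are in place, closure under the broader notions in $(2)$ immediately yields closure under the narrower strong versions appearing in $(3)$. That every strong embedding is an embedding is straightforward: if $h(x)\,\r S\, h(x')$ forces $x\,\r R\,x'$, then setting $z := x'$ and invoking the reflexivity of $\ple R$ (Fact~\ref{facts}$(v)$) exhibits the weaker embedding witness.

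For the second, more substantive inclusion I would use the algebraic characterisation of Lemma~\ref{injlma}: it suffices to show that a strongly dense $h$ makes $\chi(h) : \pchi S(Y) \to \pchi R(X)$ injective, after which density is automatic. So suppose $A, B \in \pchi S(Y)$ satisfy $h\inv[A] = h\inv[B]$ and pick $y \in A$. Strong density supplies $x \in X$ with $h(x) \ple S y$ and $y \nle S h(x)$. Fact~\ref{facts}$(vi)$ applied to $h(x) \ple S y$ places $h(x) \in A$, hence $h(x) \in B$. Then combining $y \nle S h(x)$ with the fixpoint description $B = \pneg S \nneg S B$ together with pseudo-symmetry of $\r S$ yields $y \in B$, as required for injectivity.

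The main obstacle will be precisely this final step. The refinement $y \nle S h(x)$ transmits membership only for elements of the negative algebra $\nchi S(Y)$ (by Fact~\ref{facts}$(vii)$), whereas $B$ sits in the positive algebra $\pchi S(Y)$; bridging that gap requires a careful passage through the anti-isomorphism between $\pchi S(Y)$ and $\nchi S(Y)$ induced by $\pneg S$ and $\nneg S$, together with pseudo-symmetry to reconcile the $\r S$-source and $\r S$-target perspectives on $y$ and $h(x)$.
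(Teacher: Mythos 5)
Your architecture is exactly the paper's: the cycle $(1)\Rightarrow(2)\Rightarrow(3)\Rightarrow(1)$, with the first and last implications delegated to Lemmas~\ref{gtlma1} and~\ref{gtlma2} and the middle one reduced to ``every strong subframe is a subframe'' and ``every strongly dense image is a dense image''. The embedding half is fine as you state it: $z:=x'$ plus reflexivity of $\ple R$ turns a strong embedding into an embedding. The paper itself simply declares $(2)\Rightarrow(3)$ immediate, so up to this point you are reproducing its proof.

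The gap is exactly where you locate it, and the passage you defer to cannot be carried out as described. In your injectivity argument you arrive at $h(x)\ple S y$, $y\nle S h(x)$ and $h(x)\in B$ with $B\in\pchi S(Y)$, and need $y\in B$. By Fact~\ref{facts}$(vi)$--$(vii)$, $h(x)\ple S y$ transports membership in \emph{positive} sets from $y$ to $h(x)$, and $y\nle S h(x)$ transports membership in \emph{negative} sets from $h(x)$ to $y$; neither condition, nor pseudo-symmetry, nor the anti-isomorphism $\pneg S/\nneg S$, moves membership in a positive set from $h(x)$ back to $y$ --- for that one needs $y\ple S h(x)$, which strong density does not supply. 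The implication genuinely fails for arbitrary $f$-morphisms: take $Y=\{u,y\}$ with $\r S=\{(u,u),(u,y),(y,y)\}$, $X=\{u\}$ with $u\,\r R\,u$, and $h$ the inclusion. One checks that both are fundamental frames and $h$ is an $f$-morphism; since $u\ple S y$ and $y\nle S u$, $h$ is strongly dense, yet $y\,\r S\,y$ while $\neg(y\,\r S\,u)$, so $h$ is not dense, and accordingly $\chi(h)$ identifies the distinct positive sets $\{u\}$ and $Y$. So ``strongly dense implies dense'' --- the entire content of the paper's ``immediate'' step --- is not provable by your route; your instinct that this is the hard point is correct, but as written the proposal (and, for that matter, the paper's one-line justification) leaves $(2)\Rightarrow(3)$ open. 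A repair would strengthen the definition of strong density to also require $y\ple S h(x)$ (a condition the canonical-frame witnesses constructed in Lemma~\ref{replma2} do satisfy), after which your injectivity argument closes via Fact~\ref{facts}$(vi)$ alone.
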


     \begin{proof}
         We have the following chain of implications: \[(i)\Rightarrow (ii) \Rightarrow (iii) \Rightarrow (i),\] where the first implication follows from Lemma \ref{gtlma1}, the second implication is immediate, and the third one follows from Lemma \ref{gtlma2}.
     \end{proof}

Finally, we can also characterize classes of fundamental frames of the form $Mod(L)$ for $L$ a canonical superfundamental logic, i.e., a logic  extending Fundamental Logic whose corresponding variety of fundamental lattices is closed under $\pi$-canonical extensions.

\begin{corollary} \label{cancor}
    Let $\mathfrak{K}$ be a class of fundamental frames. The following are equivalent:

    \begin{enumerate}
        \item $\mathfrak{K} = Mod(L)$, for $L$ a canonical superfundamental logic;
        \item $\mathfrak{K}$ is closed under filter extensions, subframes, dense images and coproducts, and it reflects filter extensions;
        \item $\mathfrak{K}$ is closed under filter extensions, strong subframes, strongly dense images and coproducts, and it reflects filter extensions.
    \end{enumerate}
\end{corollary}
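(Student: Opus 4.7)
The plan is to derive the corollary from Theorem \ref{mainthm} together with Lemma \ref{canlma}. The equivalence $(ii) \Leftrightarrow (iii)$ is immediate: both conditions include closure under filter extensions, so Theorem \ref{mainthm} applies directly. It thus suffices to establish $(i) \Leftrightarrow (iii)$.

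For $(i) \Rightarrow (iii)$, suppose $\mathfrak{K} = Mod(L)$ for a canonical superfundamental logic $L$. I first verify closure under filter extensions: for any $(X,\r R) \in \mathfrak{K}$, the positive algebra $\pchi R(X)$ satisfies $L$, hence lies in the associated variety $V(L)$; by canonicity, the $\pi$-canonical extension of $\pchi R(X)$ is also in $V(L)$; by Lemma \ref{canlma}, this extension is precisely the positive algebra of the filter extension $\digamma(\pchi R(X))$, so $\digamma(\pchi R(X)) \in Mod(L) = \mathfrak{K}$. Since $\mathfrak{K}$ is axiomatic and closed under filter extensions, Theorem \ref{mainthm} supplies the remaining conditions in $(iii)$.

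For $(iii) \Rightarrow (i)$, Theorem \ref{mainthm} gives $\mathfrak{K} = Mod(L)$ for $L = Log(\mathfrak{K})$; what remains is to show that $L$ is canonical. Fix a fundamental lattice $(A, \neg) \in V(L)$; the goal is to show that its $\pi$-canonical extension also lies in $V(L)$. By Birkhoff's HSP theorem together with Lemma \ref{prodlma}, there exist a coproduct $(X_I, \r R_I)$ of frames in $\mathfrak{K}$, a subalgebra $(L', \resim)$ of $\pchi{\r R_I}(X_I)$, and a surjective fundamental lattice homomorphism $(L',\resim) \twoheadrightarrow (A,\neg)$. Two applications of Lemma \ref{duallma} then yield that $\digamma(A)$ is a strong subframe of $\digamma(L')$, which in turn is a strongly dense image of the filter extension $\digamma(\pchi{\r R_I}(X_I))$. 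The closure conditions of $(iii)$ give successively that this filter extension, then $\digamma(L')$, then $\digamma(A)$ all lie in $\mathfrak{K}$. By Lemma \ref{canlma}, the positive algebra of $\digamma(A)$ is the $\pi$-canonical extension of $A$, on which $L$ must therefore be valid. Hence $V(L)$ is closed under $\pi$-canonical extensions, establishing canonicity.

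The main obstacle is the $(iii) \Rightarrow (i)$ direction, where an arbitrary algebra in $V(L)$ must be recovered from the frames in $\mathfrak{K}$ by combining Birkhoff's theorem with the canonical frame construction. The delicate point is that the algebraic decomposition must precede the translation to frames via $\digamma$, but once the frame-theoretic realization is in hand, the closure conditions and Lemma \ref{canlma} combine cleanly to yield $\digamma(A) \in \mathfrak{K}$, whose positive algebra then computes the desired canonical extension.
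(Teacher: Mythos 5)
Your proof is correct and follows essentially the same route as the paper: Theorem \ref{mainthm} disposes of $(ii)\Leftrightarrow(iii)$ and of axiomaticity, the direction $(i)\Rightarrow(iii)$ is verbatim the paper's argument via canonicity and Lemma \ref{canlma}, and the converse uses the same Birkhoff decomposition through the coproduct. The only (harmless) difference is in the last step of $(iii)\Rightarrow(i)$: you transfer the decomposition to the frame side and invoke closure under strongly dense images and strong subframes to conclude $\digamma(A)\in\mathfrak{K}$, whereas the paper stays on the algebra side, applying $\digamma$ and then $\chi$ to the diagram and using only that validity is preserved under subalgebras and homomorphic images (so it needs only closure under coproducts and filter extensions for this step); both yield that $L$ is valid on $\chi(\digamma(A))$, the $\pi$-canonical extension of $A$.
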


\begin{proof}
    The equivalence between $(ii)$ and $(iii)$ clearly follows from Theorem \ref{mainthm}, so we only need to check that $(i) \Leftrightarrow (ii)$. Suppose first that $\mathfrak{K}$ is axiomatic and that $L = Log(\mathfrak{K})$ is canonical. By Theorem \ref{mainthm}, it is enough to show that $\mathfrak{K}$ is closed under filter extensions. So suppose $(X,\r R) \in \mathfrak{K}$. Then $\pchi R (X)$ is in the variety corresponding to $L$. Since $L$ is canonical, this implies that $(\pchi R (X)^\sigma,\pneg R^\pi)$, the $\pi$-canonical extension of $(\pchi R (X), \pneg R)$, is also in that variety. But since the positive algebra of the filter extension of $(X,\r R)$ is isomorphic to $(\pchi R(X)^\sigma,\pneg R^\pi)$ by Lemma \ref{canlma}, it follows that $L$ is valid on $\digamma(\pchi R (X))$, hence $\digamma(\pchi R(X)) \in Mod(L) = \mathfrak{K}$ since $\mathfrak{K}$ is axiomatic. Hence $\mathfrak{K}$ is closed under filter extensions.
    
    Conversely, let us now assume that $\mathfrak{K}$ is closed under filter extensions, subframes, dense images and coproducts, and that it reflects filter extensions. By Theorem \ref{mainthm}, it follows that $\mathfrak{K}$ is axiomatic, i.e., $\mathfrak{K} = Mod(Log(\mathfrak{K}))$. It remains to show that $L = Log(\mathfrak{K})$ is canonical. Let $A$ be a fundamental lattice in the variety corresponding to $L$. Then there is a family $\{(X_i,\r R_i)\}_{i \in I}$ of frames in $\mathfrak{K}$ and a fundamental lattice $B$ such that $A$ is a homomorphic image of $B$ and $B$ embeds into $\prod_{i \in I} \pchi {R_i} (X_i)$, which is isomorphic to $\pchi {R_I} (X_I)$, the positive algebra of the coproduct of the family $\{(X_i,\r R_i)\}_{i \in I}$. This yields the following two diagrams, where the right one is obtained from the left one by applying successively the functors $\digamma$ and $\chi$:

\vspace{0.5em}
 \adjustbox{width=0.8\textwidth,center}{%
\begin{tikzcd}
	B && {\chi_{\r R_I}(X_I)} && {\chi(\digamma(B))} && {\chi(\digamma(\chi_{\r R_I}(X_I)))} \\
	\\
	A &&&& {\chi(\digamma(A))}
	\arrow[hook, from=1-1, to=1-3]
	\arrow[two heads, from=1-1, to=3-1]
	\arrow[hook, from=1-5, to=1-7]
	\arrow[two heads, from=1-5, to=3-5]
\end{tikzcd}}
\vspace{0.5em}

Now $\digamma(\chi_{R_I}(X_I))$ is the filter extension of the coproduct of the family $\{(X_i,\r R_i)\}_{i \in I}$, so since $\mathfrak{K}$ is closed under coproducts and filter extensions, it follows that $\digamma(\chi_{\r R_I}(X_I)) \in \mathfrak{K}$, hence $L$ is valid on $\chi(\digamma(\chi_{\r R_I}(X_I)))$. But since $\chi(\digamma(A))$ is a homomorphic image of a subalgebra of $\chi(\digamma(\chi_{\r R_I}(X_I)))$, $L$ is also valid on $\chi(\digamma(A))$. Finally, since $\chi(\digamma(A))$ is isomorphic to the $\pi$-canonical extension of $A$ by Lemma \ref{canlma}, it follows that $L$ is a canonical. This completes the proof.
\end{proof}

 \section{Adding Modal Operators} \label{section5}

In this final section, we generalize the results obtained in the previous section to Fundamental Modal Logic, which was recently developed by Holliday in \cite{holliday2024modal}. We will focus on Holliday's \textit{additive unified} setting, in which the two modalities $\Box$ and $\Diamond$ are dual notions (even though, just like in modal intuitionistic logic, the two modalities may not be interdefinable), and $\Diamond$ is an additive operator. Moreover, we will only consider the case in which a single pair $(\Box, \Diamond)$ is added to the language of fundamental logic, but generalizations to a polymodal setting are obvious. We start with the following definitions, which generalize those of fundamental lattices and fundamental frames to the modal setting in a natural way.

\begin{definition}
    A fundamental modal lattice is a tuple $(L, \neg, \Box, \Diamond)$ such that $(L, \neg)$ is a fundamental lattice, and $\Box$ and $\Diamond$ are unary operations on $L$ such that for any $a,b \in L$:
    \begin{itemize}
        \item $\Box(a \me b) = \Box a \me \Box b$, $\Box 1 = 1$;
        \item $\Diamond(a \jo b) = \Diamond a \jo \Diamond b$, $\Diamond 0 = 0$;
        \item $\Diamond \neg a \leq \neg \Box a$.
    \end{itemize}
\end{definition}

\begin{definition}
    An \textit{additive unified fundamental modal frame} (\textit{$AUFM$ frame} for short) is a tuple $(X,\r R,M)$ such that $(X,\r R)$ is fundamental frame and $M$ is a relation on $X$ satisfying the following conditions for any $x, y, z \in X$:

    \begin{itemize}
        \item $xMy\reef R z \Rightarrow \exists x'\reef R x \forall y' \r R x' \exists z': x' M z' \reef R z$;
        \item $xMy \r R z \Rightarrow \exists x' \r R x \forall y' \reef R x' \exists z': x' M z' \r R z$.
    \end{itemize}
\end{definition}

As shown in \cite{holliday2024modal}, given an additive unified fundamental modal frame $(X,\r R,M)$, the unary operation $\Box_M: \Po(X) \to \Po(X)$ given by $\Box_M A = \{x \in X \mid \forall y \in X: x M y \Rightarrow y \in A\}$ restricts to a map from $\pchi R(X)$ to $\pchi R(X)$. Moreover, one can define a ``pseudo-dual'' $\Diamond_M$ to $\Box_M$, given by $\Diamond_M A = \pneg R \Box_M \nneg R A$, which has the property that $\Diamond_M \pneg R A \sset \pneg R \Box_M A$ for any $A \in \pchi R(X)$. Finally, any fundamental modal lattice $(L, \neg, \Box, \Diamond)$ modally embeds into the positive algebra of the expansion of its dual canonical frame $\digamma(L)$ with a relation $M$ given by $xMx'$ iff $\Box a \in x_F$ implies $a \in x'_F$ and $\Diamond b \in x_I$ implies $b \in x'_I$, and this defines an $AUFM$ frame. \\

Let us now define the relevant notion of morphism between $AUFM$ frames.

\begin{definition} \label{aufmdef}
    Let $(X,\r R,M)$ and $(Y,\r S,N)$ be two $AUFM$ frames. An \textit{$AUFM$-morphism} is a f-morphism $h: (X,\r R) \to (Y,\r S)$ satisfying the following additional constraints for any $x, x' \in X$ and any $y \in Y$:

\begin{enumerate}
    \item $x M x' \Rightarrow h(x) N h(x')$;
    \item $h(x) N y \Rightarrow \exists x' \in X: x M x'$ and $y \dle S h(x')$,
\end{enumerate}
where $\dle S = \ple S \cap \nle S$.
\end{definition}

\begin{lemma}
    Let $h: (X,\r R,M) \to (Y,\r S,N)$ be an $AUFM$-morphism between two $AUFM$ frames. Then for any $A \in \pchi S(Y)$, $h\inv[\Box_N A] = \Box_M h\inv[A]$ and $h\inv[\Diamond_N A] = \Diamond_M h\inv [A]$.
\end{lemma}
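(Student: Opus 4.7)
My plan is to prove the two set-equalities one after the other. The $\Box$-equality is settled by a direct zig-zag computation: for $\Box_M h\inv[A] \sset h\inv[\Box_N A]$, given $h(x) N y$ I apply condition~$2$ of Definition~\ref{aufmdef} to produce $x'$ with $xMx'$ and $y \dle S h(x')$; then $h(x') \in A$ by hypothesis and $y \ple S h(x')$, so $y \in A$ by Fact~\ref{facts}(vi). The reverse inclusion is immediate from condition~$1$ of Definition~\ref{aufmdef}.

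For the $\Diamond$-equality I would unfold $\Diamond_N A = \pneg S \Box_N \nneg S A$ and $\Diamond_M h\inv[A] = \pneg R \Box_M \nneg R h\inv[A]$ and prove equality by the chain
\begin{align*}
h\inv[\pneg S \Box_N \nneg S A] &= \pneg R h\inv[\Box_N \nneg S A]\\
&= \pneg R \Box_M h\inv[\nneg S A] = \pneg R \Box_M \nneg R h\inv[A].
\end{align*}
This rests on three intermediate commutation facts. The last step is $h\inv[\nneg S A] = \nneg R h\inv[A]$ for $A \in \pchi S(Y)$, which is the mirror image of the part of Lemma~\ref{fmorlma} showing $\chi(h)$ preserves $\pneg$; it uses conditions~$1$ and~$2$ of the $f$-morphism together with Fact~\ref{facts}(vi). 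The middle step is the $\Box$-equality applied to $\nneg S A \in \nchi S(Y)$ rather than to an element of the positive algebra; its proof mirrors the one above but uses the $\nle S$ component of $\dle S$ in condition~$2$ of Definition~\ref{aufmdef} in combination with Fact~\ref{facts}(vii) instead of Fact~\ref{facts}(vi). The first step is $h\inv[\pneg S B] = \pneg R h\inv[B]$ for $B \in \nchi S(Y)$.

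The main obstacle is this last commutation. Copying the proof from Lemma~\ref{fmorlma} will not work: that argument invokes $f$-morphism condition~$2$ (or~$4$) to produce some $x'$ with $h(x') \ple S y$ and then appeals to Fact~\ref{facts}(vi), which demands that $B$ lie in $\pchi S(Y)$; here $B = \Box_N \nneg S A$ lies in $\nchi S(Y)$ instead. The resolution is to invoke condition~$3$ of the $f$-morphism definition, which furnishes $x'$ with $h(x') \nle S y$, and then apply Fact~\ref{facts}(vii) to conclude $y \notin B$ from $h(x') \notin B$. This is precisely the kind of asymmetric situation condition~$3$ was introduced to handle, and tracing its use here makes transparent why the $f$-morphism definition carries both a $\ple S$- and a $\nle S$-version of the backward zig-zag.
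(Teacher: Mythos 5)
Your proof is correct and follows essentially the same route as the paper: a direct zig-zag for the $\Box$-equality (which the paper states for arguments in $\pchi S(Y) \cup \nchi S(Y)$, matching your split between Fact~\ref{facts}(vi) and (vii)), followed by the identical three-step chain unfolding $\Diamond_N = \pneg S \Box_N \nneg S$. The only difference is one of detail: the paper compresses the outer commutation steps into ``follow from the fact that $h$ is an $f$-morphism'' (adding only that additivity guarantees $\Box_N \nneg S A \in \nchi S(Y)$), whereas you correctly trace the first step to condition~3 of the $f$-morphism definition together with Fact~\ref{facts}(vii) --- a faithful elaboration rather than a different argument.
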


\begin{proof}
    Let us first show that $h\inv[\Box_N A] = \Box_M h\inv[A]$ for any $A \in \pchi S(Y) \cup \nchi S(Y)$. Suppose first that $h(x) \in \Box_N A$, and let $xMx'$. Then by condition $(i)$ on $AUFM$-morphisms, $h(x)Nh(x')$, so $x' \in h\inv[A]$. This shows the left-to-right inclusion. For the converse, assume that $x M x'$ implies $h(x') \in A$, and let $y \in Y$ be such that $h(x)Ny$. By condition $(ii)$ on $AUFM$-morphisms, there is $x' \in X$ such that $xMx'$ and $y \dle S h(x')$. But this means that $h(x') \in A$ and therefore also $y \in A$, since $A \in \pchi S (Y) \cup \nchi S (Y)$ and we have that $y \ple S h(x')$ and $y \nle S h(x')$.

    Now, to show that $h\inv$ also preserves the $\Diamond$ operation, recall that, for any $A \in \pchi S(Y)$, $\Diamond_N A = \pneg S \Box_N \nneg S A$. Since $(Y,\r S,N)$ is additive we have that $\Box_N B \in \nchi S(Y)$ whenever $B \in \nchi S(Y)$. Hence for any $A \in \pchi S(Y)$, we have the following chain of identities:
    \[h\inv[\pneg S \Box_N \nneg S A] = \pneg R h\inv[\Box_N \nneg S A] = \pneg R \Box_M h\inv[\nneg S A] = \pneg R \Box_M \nneg R h\inv[A],\]
    where the first and third identities follow from the fact that $h$ is an $f$-morphism. This completes the proof.
    \end{proof}

Here again, our definition of an $AUFM$-morphism is general enough to capture all modal fundamental lattice homomorphisms.

    \begin{lemma} \label{modmorlma}
        Let $(L, \neg, \Box_L,\Diamond_L)$ and $(N,\resim,\Box_N,
        \Diamond_N)$ be fundamental modal lattices, and $f:L \to N$ a fundamental modal homomorphism. Then $\digamma(f): \digamma(N) \to \digamma(L)$ is an $AUFM$-morphism.
    \end{lemma}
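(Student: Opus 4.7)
The plan is to leverage Lemma \ref{canmorlma}, which already shows that $\digamma(f)$ is an $f$-morphism, and verify the two additional clauses of Definition \ref{aufmdef}. Write $M_N$ (resp.\ $M_L$) for the canonical modal relation on $\digamma(N)$ (resp.\ $\digamma(L)$), so $x \, M_N \, x'$ iff $\Box_N a \in x_F \Rightarrow a \in x'_F$ and $\Diamond_N b \in x_I \Rightarrow b \in x'_I$, with $M_L$ described analogously. Condition $(1)$ is straightforward: if $x \, M_N \, x'$, then since $f$ is a modal homomorphism, $\Box_L c \in f\inv[x_F]$ is equivalent to $\Box_N f(c) \in x_F$, which by $x \, M_N \, x'$ gives $f(c) \in x'_F$, i.e.\ $c \in f\inv[x'_F]$; the $\Diamond$-clause is dual, yielding $\digamma(f)(x) \, M_L \, \digamma(f)(x')$.

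For condition $(2)$, given $\digamma(f)(x) \, M_L \, y$, I propose the witness $x' = (x'_F, x'_I)$ with $x'_F = \{a \in N \mid \Box_N a \in x_F\}$ and $x'_I = \{b \in N \mid \Diamond_N b \in x_I\}$. The additivity axioms (together with monotonicity of $\Box_N,\Diamond_N$) make $x'_F$ a filter and $x'_I$ an ideal. Both are proper using $f(0)=0$, $f(1)=1$ and the hypothesis: if $0 \in x'_F$ then $\Box_N 0 = f(\Box_L 0) \in x_F$, so $\Box_L 0 \in f\inv[x_F]$, forcing $0 \in y_F$ by the $M_L$-relation and contradicting properness of $y_F$; properness of $x'_I$ is dual. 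The coherence condition $x'_F \sset \resim^{-1}[x'_I]$ follows from the modal axiom $\Diamond_N \resim a \leq \resim \Box_N a$: for $a \in x'_F$, $\Box_N a \in x_F$ gives $\resim \Box_N a \in x_I$, hence $\Diamond_N \resim a \in x_I$, i.e.\ $\resim a \in x'_I$. Hence $x' \in \digamma(N)$, and $x \, M_N \, x'$ holds by construction. Finally, $y \dle S \digamma(f)(x')$---which, using that $\ple S$ and $\nle S$ are converse inclusion on filters and ideals, unpacks to $f\inv[x'_F] \sset y_F$ and $f\inv[x'_I] \sset y_I$---is verified by pushing $f$ through $\Box_L$ and $\Diamond_L$ and invoking $\digamma(f)(x) \, M_L \, y$.

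The only conceptual step is the choice of $x'$. The guiding observation is that the modal axiom $\Diamond \resim a \leq \resim \Box a$ is precisely what is needed to transfer the compatibility condition $x_F \sset \resim^{-1}[x_I]$ enjoyed by $x$ to the corresponding condition demanded of $x'$; once this is noticed, the definition of $x'$ is essentially forced by the requirement that $x \, M_N \, x'$ hold, and everything else reduces to unfolding definitions and invoking $f$'s preservation of the modal operators.
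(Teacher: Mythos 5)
Your proposal is correct and follows essentially the same route as the paper: reduce to Lemma \ref{canmorlma} for the $f$-morphism clauses, verify condition (1) by pushing $f$ through $\Box$ and $\Diamond$, and witness condition (2) with $x' = (\{a \mid \Box_N a \in x_F\}, \{b \mid \Diamond_N b \in x_I\})$, using the axiom $\Diamond_N \resim a \leq \resim \Box_N a$ for the coherence condition. You are in fact slightly more careful than the paper in checking properness of $x'_F$ and $x'_I$ via the hypothesis $\digamma(f)(x)\,M_L\,y$, a point the paper's proof passes over.
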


        \begin{proof}
        By Lemma \ref{canmorlma}, we know already that $\digamma(f)$, given by $\digamma(f)(x) = (f\inv[x_F],f\inv[x_I])$ for any $x \in \digamma(M)$, is a f-morphism. Hence we only need to check the extra two conditions in Definition \ref{aufmdef}. Let $R_L$ and $M_L$ be the following relations on $\digamma(L)$:
        \begin{itemize}
            \item $x \r R_L y$ iff $x_I \cap y_F = \emptyset$;
            \item $x M_L y$ iff $\{\Box_L a \mid a \in x_F\} \sset y_F$ and $\{\Diamond_L b \mid b \in x_I\} \sset y_I$,
        \end{itemize}
        and let $\r R_N$ and $M_N$ be defined similarly on $\digamma(N)$. Suppose first that $xM_Ny$, and let $\Box_L a \in (\digamma(f)(x))_F$. Then $h(\Box_L a) = \Box_Mh(a) \in x_F$, which means that $h(a) \in y_F$. Similarly, $\Diamond_L b \in (\digamma(f)(x))_I$ implies that $h(b) \in y_I$. But this means that $\digamma(f)(x)M_L\digamma(f)(y)$.

        To check that the second condition also holds, suppose now that $\digamma(f)(x)Ny$ for some $x \in \digamma(N), y \in \digamma(L)$. Let $x'=(F,I)$, where $F=\{a \in N \mid \Box_N a \in x_F\}$ and $I=\{b \in N \mid \Diamond_N b \in x_I\}$. To verify that $x' \in \digamma(N)$, we only need to check that $a \in F$ implies $\resim a \in I$. But if $a \in F$, we have that $\Box_N a \in x_F$, hence $\resim\Box_N a \in I$. Since $\Diamond_N \resim a \leq_N \resim \Box_N a$, it follows that $\Diamond_N \resim a \in x_I$, hence $\resim a \in I$. Moreover, we clearly have $xM_N x'$. Finally, for any $a,b \in L$, we have that: \begin{align*}
            a \in (\digamma(f)(x))_F &\Leftrightarrow f(a) \in F \Leftrightarrow \Box_N f(a) \in x_F \Leftrightarrow \Box_L a \in h(x')_F \Rightarrow a \in y_F\\
            b \in (\digamma(f)(x))_I &\Leftrightarrow f(b) \in I \Leftrightarrow \Diamond_N f(a) \in x_I \Leftrightarrow \Diamond_L a \in h(x')_I \Rightarrow b \in y_I,
        \end{align*}
        from which we conclude that $y \dle {R_L} \digamma(f)(x')$.
    \end{proof}

    As a consequence, we may straightforwardly adapt our analysis of the duals of homomorphic images, subalgebras and products to the modal case.

    \begin{definition}
            Let $(X,\r R,M)$ and $(Y,\r S,N)$ be $AUFM$ frames. Then $(X,\r R,M)$ is a \textit{modal subframe} of $(Y,\r S,N)$ if there is an embedding $h: (X,\r R) \to (Y,\r S)$ which is also an $AUFM$-morphism, and it is a \textit{dense modal image} of $(Y,\r S,N)$ if there is a dense $f$-morphism $g: (Y,\r S) \to (X,\r R)$ which is also an $AUFM$-morphism. Moreover, $(X,\r R,M)$ is a \textit{strong modal subframe} (resp. a \textit{strongly dense modal image}) of $(Y,\r S,N)$ if $h$ is a strong embedding (resp. $g$ is strongly dense).
    \end{definition}

    \begin{definition}
        The coproduct of a family $\{(X_i,\r R_i,M_i)\}_{i \in I}$ of $AUFM$ frames is the frame $(X_I,\r R_I,M_I)$, where $(X_I,\r R_I)$ is the coproduct of the family of fundamental frames $\{(X_i,\r R_i)\}_{i \in I}$, and $M_I$ is the disjoint union of the relations $\{M_i\}_{i \in I}$.
    \end{definition}

    It is routine to verify that the coproduct of a family of $AUFM$ frames is itself an $AUFM$ frame, and that its positive algebra is isomorphic to the product of the positive algebras of each frame in the family, so we will omit the proof. Similarly, we define the filter extension of an $AUFM$ frame in a standard way.

    \begin{definition}
        Let $(X,\r R,M)$ be a $AUFM$ frame. Then the filter extension of $(X,\r R,M)$ is the $AUFM$ frame $(Y, \r S, N) = \digamma(\pchi R (X),\pneg R, \Box_M, \Diamond_M)$, where for any $x, y \in Y$, $xNy$ iff ($\Box_M A \in x_F$ implies $A \in y_F$, and $\nneg R \Diamond_M A \in x_I$ implies $\nneg R A \in y_I$ for any $A \in \pchi R(X)$).
    \end{definition}
    
Filter extensions correspond to canonical extensions in the modal case as well.

    \begin{lemma} \label{modcanlma}
        For any fundamental modal lattice $(L,\neg,\Box, \Diamond)$, the positive algebra of $\digamma(L, \neg, \Box, \Diamond)$ is the $\pi$-canonical extension of $(L, \neg, \Box, \Diamond)$.
    \end{lemma}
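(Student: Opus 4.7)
The plan is to build on Lemma~\ref{canlma}, which shows that $(\chi(\digamma(L)), \pneg R)$ is the $\pi$-canonical extension of the non-modal reduct $(L, \neg)$ via the embedding $\widehat{\cdot}$. What remains is to verify that the unary operations $\Box_M$ and $\Diamond_M$ arising from the canonical relation $M$ agree with the $\pi$-extensions $\Box^\pi$ and $\Diamond^\pi$ of $\Box$ and $\Diamond$, respectively.

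First I would establish the base case $\Box_M \widehat{a} = \widehat{\Box a}$ and $\Diamond_M \widehat{a} = \widehat{\Diamond a}$ for every $a \in L$. One direction is immediate from the definition of $M$. For the converse in the $\Box$ case, given $x \in \digamma(L)$ with $\Box a \notin x_F$, I would construct a witness $y \in \digamma(L)$ with $xMy$ and $a \notin y_F$ by letting $y_F$ be the filter generated by $\{b \mid \Box b \in x_F\}$ and $y_I$ be the ideal generated by $\{c \mid \Diamond c \in x_I\} \cup \{a\}$; properness of $y_F$ and $y_I$, together with the compatibility $y_F \sset \neg\inv[y_I]$ required to land in $\digamma(L)$, both rely on the axiom $\Diamond \neg b \leq \neg \Box b$. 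The $\Diamond$ case is symmetric, swapping the roles of filter and ideal.

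Second, I would extend this pointwise identity to arbitrary $A \in \chi(\digamma(L))$ using the double-density established in Lemma~\ref{canlma}: every such $A$ is both a join of meets and a meet of joins of elements of the form $\widehat{a}$, witnessed respectively by the filter and ideal components of the canonical points. Since $\Box$ is meet-preserving and $\Box_M$ preserves arbitrary meets as an operator on $\chi(\digamma(L))$, applying $\Box_M$ to the meet-of-joins representation of $A$, together with the base case, produces the standard formula for $\Box^\pi A$. The subsidiary compactness step, required to match $\Box_M A$ with the $\pi$-extension rather than just bounding one in terms of the other, is carried out pointwise: for $x \in \Box_M A$, one applies the compactness property of $L$ inside $\chi(\digamma(L))$ (from Lemma~\ref{canlma}) to extract a finite witness certifying membership of $x$ in the relevant join.

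The main obstacle will be the parallel argument for $\Diamond$. Since $\Diamond_M$ is defined indirectly as $\pneg R \Box_M \nneg R$ through the negative algebra, matching it with $\Diamond^\pi$ requires a dual compactness argument run on the ideal side of canonical points, using the ideal-based representation of elements of $\chi(\digamma(L))$ given in Lemma~\ref{canlma}. The axiom $\Diamond \neg b \leq \neg \Box b$, together with the fact from Lemma~\ref{canlma} that $\pneg R$ is already the $\pi$-extension of $\neg$, provides the coherence needed to transfer the compactness argument back across the anti-isomorphism between the positive and negative algebras and close the proof.
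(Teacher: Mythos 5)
Your proposal is correct and takes essentially the same route as the paper: reduce to Lemma~\ref{canlma} for the lattice reduct, then show that $\Box_M$ and $\Diamond_M$ coincide with the $\pi$-extensions by constructing canonical $M$-successors of the form $(\{a \mid \Box a \in x_F\}, \{b \mid \Diamond b \in x_I\})$ (whose membership in $\digamma(L)$ rests on $\Diamond \neg a \leq \neg \Box a$), running a compactness argument on ideal-joins, and treating $\Diamond_M$ through $\pneg R \Box_M \nneg R$ and the anti-isomorphism with the negative algebra. The only difference is organizational: you isolate the base case $\Box_M \widehat{a} = \widehat{\Box a}$ and then invoke double density and compactness, whereas the paper verifies the $\pi$-extension formula directly on arbitrary elements of $\pchi R(\digamma(L))$; the substantive witness constructions are the same, and your acknowledged ``subsidiary compactness step'' is exactly where the paper's claim about $A \sset \bigjo_{a \in I}\widehat{a}$ being equivalent to $x_F \cap I \neq \emptyset$ for all $x \in A$ does the work.
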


        \begin{proof}
        From Lemma \ref{canlma}, we already know that $(\pchi R(\digamma(L, \neg)), \nneg R)$ is the $\pi$-canonical extension of $(L, \neg)$. Hence we only need to verify that $\Box_M$ is the $\pi$-extension of $\Box$ and that $\Diamond_M$ is the $\pi$-extension of $\Diamond$. For the first one, we must show that for any $A \in \chi(\digamma(L))$, \[\Box_M A = \bigcap\{ \pneg R \nneg R \bigcup_{a \in I} \widehat{\Box a} \mid I \sset L \text{ is an ideal and } A \sset \pneg R \nneg R \bigcup_{a \in I} \widehat{a}\}.\]

        First, we claim that for any ideal $I$ on $L$, $A \sset \bigjo_{a \in I} \widehat{a}$ iff $x_F \cap I \neq \emptyset$ for all $x \in A$. For the left-to-right direction, if there is $x \in A$ such that $x_F \cap I = \emptyset$, then let $y= (\{1\}, I)$. We have that $y \r R x$, but clearly, if $y \r R z$, then $z \notin \bigcup_{a \in A} \widehat{a}$, hence $x \notin \bigjo_{a \in I} \widehat{a}$. For the right-to-left direction, if $x_F \cap I \neq \emptyset$ for all $x \in A$, then $A \sset \bigcup_{a \in I} \widehat{a} \sset \bigjo_{a \in I} \widehat{a}$. This completes the proof of the claim.
        
        Now for the left-to-right direction of the equality, suppose that $x \in \Box_M A$, and let $y = (\{a \mid \Box a \in x_F\}, \{b \mid \Diamond b \in x_I\})$. Clearly, $xMy$, hence $y \in A$. But by the claim, this means that $y_F \cap I \neq \emptyset$. Hence $x \in \widehat{\Box a}$ for some $a \in I$. For the right-to-left direction, now suppose that $x \notin \Box_M A$. Then we have $y, z$ such that $x M y \r R z$ and $z \in \nneg R A$. Since $z \in \nneg R A$, this means that for any $w \in A$, $z_I \cap w_F \neq \emptyset$, and therefore, by the claim, $A \sset \bigjo_{a \in z_I} \widehat{a}$. Now it only remains to show that $x \notin \bigjo_{a \in z_I} \widehat{\Box a}$. Let $w = (\{1\}, I)$, where $I = \dnset\{\Box b \mid b \in z_I\}$ (note that $I$ is an ideal because $\Box a \jo \Box b \leq \Box (a \jo b)$ for any $a, b \in z_I$). Clearly, $w \in \nneg \bigcup_{a \in z_I} \widehat{\Box a}$. But we also have $x\r R w$: indeed, if $x_F \cap w_I \neq \emptyset$, then there is $b \in z_I$ such that $\Box b \in x_F$, and hence $b \in y_F$, contradicting our assumption. This shows that $x \notin \bigjo_{a \in z_I} \widehat{\Box a}$.

        Finally, let us show that $\Diamond_M$ is the $\pi$-extension of $\Diamond$. For any $A \in \pchi R (\digamma(L))$ and any $x \in \digamma(L)$, we need to show that $x \in \Diamond_M A$ iff $x \in \bigjo_{a \in I} \widehat{\Diamond a}$ for every ideal $I$ such that $A \sset \bigjo_{a \in I} \widehat{a}$. For the left-to-right direction, suppose that $x \in \Diamond_M A$ and that $I$ is an ideal such that $A \sset \bigjo_{a \in I} \widehat{a}$. Using the fact that $\Diamond_M = \pneg R \Box_M \nneg R$, a simple computation shows that we need to show that $x \in \pneg R \bigcap_{a \in I} \Box_M \widecheck{a}$. For this, it is enough to show that $\dnset \{\Diamond a \mid a \in I\} \cap x_F \neq \emptyset$, since this will show that for any $y \r R x$, $y \notin \Box_M \widecheck{a}$ for some $a \in I$. Suppose, towards a contradiction, that $\dnset \{\Diamond a \mid a \in I\} \cap x_F = \emptyset$. Then there is $y\r Rx$ such that $y_I = \dnset \{\Diamond a \mid a \in I\}$. But now we claim that $y \in \Box \nneg R A$. Indeed, if $y M z \r R w$, then $w_F \cap I = \emptyset$ which, by the first claim above, means that $w \notin A$. Hence $x \notin \pneg R \Box_M \nneg R A$, contradicting the assumption that $x \in \Diamond_M A$.

        For the converse direction, suppose now that $x \notin \Diamond_M A = \pneg R \Box_M \nneg R A$. Then we have $y \in \Box_M \nneg R A$ such that $y\r Rx$. Let $w = (\{a \mid \Box a \in y_F\}, \{b \mid \Diamond b \in y_I\})$. Then $yMw$, hence $w \in \nneg R A$. But this implies that $w_I \cap z_F = \emptyset$ for any $z \in A$, hence, by the first claim above, $A \sset \bigjo_{a \in w_I} \widehat{a}$. Now we claim that $x \notin \bigjo_{a \in w_I} \widehat{\Diamond a}$. To show this, it is enough to have that $x_F \cap \{\Diamond a \mid a \in w_I\} = \emptyset$. But the latter is clear, since $a \in w_I$ implies $\Diamond a \in y_I$, which in turns implies $\Diamond a \notin x_F$ by assumption on $y$. This completes the proof.
    \end{proof}

As a direct consequence of Theorem \ref{mainthm}, we obtain a Goldblatt-Thomason theorem for fundamental modal logic, as well as a characterization of classes of $AUFM$ frames of the form $Mod(L)$ for $L$ a canonical superfundamental modal logic.

 \begin{theorem} \label{modalthm}
     Let $\mathfrak{K}$ be a class of $AUFM$ frames closed under filter extensions. Then the following are equivalent:

     \begin{enumerate}
         \item $\mathfrak{K}$ is axiomatic;
         \item $\mathfrak{K}$ is closed under modal subframes, dense modals images and coproducts, and it reflects filter extensions;
         \item $\mathfrak{K}$ is closed under strong modal subframes, strongly dense modal images and coproducts, and it reflects filter extensions. 
     \end{enumerate}
\end{theorem}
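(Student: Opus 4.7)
The plan is to mirror the proof of Theorem \ref{mainthm}, using the modal analogues of the duality lemmas developed earlier in Section \ref{section5}. The non-trivial implications are $(i) \Rightarrow (ii)$ and $(iii) \Rightarrow (i)$; the implication $(ii) \Rightarrow (iii)$ is immediate since every strong modal subframe (resp.\ strongly dense modal image) is in particular a modal subframe (resp.\ dense modal image).

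For $(i) \Rightarrow (ii)$, I would adapt the argument of Lemma \ref{gtlma1}. Suppose $\mathfrak{K} = Mod(\Gamma)$, and let $(X,\r R,M)$ be a modal subframe (resp.\ dense modal image) of some $(Y,\r S,N) \in \mathfrak{K}$ via an embedding (resp.\ dense $f$-morphism) $h$ that is also an $AUFM$-morphism. Combining Lemmas \ref{injlma} and \ref{surjlma} with the preservation of $\Box_M$ and $\Diamond_M$ under $AUFM$-morphisms (established just before Lemma \ref{modmorlma}), $\chi(h)$ is a surjective (resp.\ injective) fundamental modal lattice homomorphism, so the positive algebra of $(X,\r R,M)$ is a homomorphic image (resp.\ subalgebra) of that of $(Y,\r S,N)$ as a fundamental modal lattice, and hence still validates $\Gamma$. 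Closure under coproducts follows from the observation (stated just before Lemma \ref{modcanlma}) that the positive algebra of a coproduct of $AUFM$ frames is isomorphic to the product of the positive algebras of its components. Finally, reflection of filter extensions follows from Lemma \ref{modcanlma}: the positive algebra of $(X,\r R,M)$ embeds into that of its filter extension (as the canonical extension), so validity of $\Gamma$ on the filter extension transfers back to $(X,\r R,M)$.

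For $(iii) \Rightarrow (i)$, I would adapt Lemma \ref{gtlma2}. Let $\Gamma = Log(\mathfrak{K})$ and suppose $\Gamma$ is valid on an $AUFM$ frame $(X,\r R,M)$; the goal is to show $(X,\r R,M) \in \mathfrak{K}$. Applying Birkhoff's HSP theorem to the variety of fundamental modal lattices generated by the positive algebras of frames in $\mathfrak{K}$ yields a family $\{(X_i,\r R_i,M_i)\}_{i \in I}$ in $\mathfrak{K}$ and a fundamental modal lattice $L$ such that $L$ embeds into $\prod_{i \in I} \chi(X_i,\r R_i,M_i)$ and the positive algebra of $(X,\r R,M)$ is a homomorphic image of $L$. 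Using Lemma \ref{modmorlma} together with the modal analogue of Lemma \ref{replma2}, applying $\digamma$ converts this data into the following: $\digamma(L)$ is a strongly dense modal image of $\digamma(\chi(X_I,\r R_I,M_I))$, where $(X_I,\r R_I,M_I)$ is the coproduct of the family, and $\digamma(\chi(X,\r R,M))$ is a strong modal subframe of $\digamma(L)$. Closure under coproducts and filter extensions then gives $\digamma(\chi(X_I,\r R_I,M_I)) \in \mathfrak{K}$; closure under strongly dense modal images gives $\digamma(L) \in \mathfrak{K}$; closure under strong modal subframes gives $\digamma(\chi(X,\r R,M)) \in \mathfrak{K}$; and finally, reflection of filter extensions yields $(X,\r R,M) \in \mathfrak{K}$.

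The main obstacle is verifying the modal analogue of Lemma \ref{replma2}, namely that $\digamma(f)$ is a strongly dense $AUFM$-morphism (resp.\ a strong $AUFM$-embedding) whenever $f$ is an injective (resp.\ surjective) fundamental modal homomorphism. Lemma \ref{modmorlma} already ensures that $\digamma(f)$ is an $AUFM$-morphism, and the strong density and strong embedding conditions involve only the underlying openness relations $\r R$ and $\r S$, so they are inherited directly from Lemma \ref{replma2} applied to the underlying fundamental lattice reduct $(L, \neg)$. Apart from this bookkeeping, the proof is a routine transcription of the proof of Theorem \ref{mainthm} to the modal setting.
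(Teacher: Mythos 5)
Your proposal is correct and follows exactly the route the paper intends: the paper explicitly omits the proof of Theorem \ref{modalthm}, stating that it ``completely mirrors the non-modal case,'' and your transcription of Lemmas \ref{gtlma1} and \ref{gtlma2} using the modal duality results of Section \ref{section5} is precisely that mirroring. Your observation that the strong density and strong embedding conditions concern only the underlying openness relations, so the modal analogue of Lemma \ref{replma2} reduces to the non-modal one via Lemma \ref{modmorlma}, is the right way to discharge the only step not spelled out in the paper.
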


\begin{corollary}\label{modcancor}
    Let $\mathfrak{K}$ be a class of $AUFM$ frames. The following are equivalent:

    \begin{enumerate}
        \item $\mathfrak{K} = Mod(L)$, for $L$ a canonical superfundamental modal logic;
        \item $\mathfrak{K}$ is closed under filter extensions, modal subframes, dense modal images and coproducts, and it reflects filter extensions;
        \item $\mathfrak{K}$ is closed under filter extensions, strong modal subframes, strongly dense modal images and coproducts, and it reflects filter extensions.
    \end{enumerate}
\end{corollary}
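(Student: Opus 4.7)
The plan is to mirror the proof of Corollary \ref{cancor} in the modal setting, using Theorem \ref{modalthm} in place of Theorem \ref{mainthm} and Lemma \ref{modcanlma} in place of Lemma \ref{canlma}. The equivalence between $(ii)$ and $(iii)$ is immediate from Theorem \ref{modalthm}, since both conditions add the same extra hypothesis (closure under filter extensions) on top of the two equivalent sets of closure conditions given there. So the work reduces to establishing $(i) \Leftrightarrow (ii)$.

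For $(i) \Rightarrow (ii)$, assume $\mathfrak{K} = Mod(L)$ for a canonical superfundamental modal logic $L$. By Theorem \ref{modalthm} it suffices to show that $\mathfrak{K}$ is closed under filter extensions. Given $(X,\r R, M) \in \mathfrak{K}$, the positive algebra $(\chi_{\r R}(X), \pneg R, \Box_M, \Diamond_M)$ lies in the variety $\mathbb{V}(L)$ corresponding to $L$. Canonicity of $L$ means $\mathbb{V}(L)$ is closed under $\pi$-canonical extensions, and by Lemma \ref{modcanlma} the positive algebra of the filter extension of $(X,\r R, M)$ is the $\pi$-canonical extension of $\chi(X,\r R, M)$. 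Hence this algebra lies in $\mathbb{V}(L)$, so $L$ is valid on the filter extension and the filter extension lies in $\mathfrak{K}$.

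For $(ii) \Rightarrow (i)$, assume the closure conditions in $(ii)$. Theorem \ref{modalthm} yields $\mathfrak{K} = Mod(Log(\mathfrak{K}))$, so setting $L = Log(\mathfrak{K})$ it only remains to show $L$ is canonical. Pick any fundamental modal lattice $A \in \mathbb{V}(L)$. By Birkhoff's HSP theorem applied to the class $\{\chi(Y,\r S,N) \mid (Y,\r S,N) \in \mathfrak{K}\}$, there are a family $\{(X_i,\r R_i,M_i)\}_{i \in I}$ of frames in $\mathfrak{K}$ and a fundamental modal lattice $B$ such that $A$ is a homomorphic image of $B$ and $B$ embeds into $\prod_{i \in I} \chi_{\r R_i}(X_i)$. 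The latter is isomorphic to $\chi_{\r R_I}(X_I)$, the positive algebra of the coproduct $(X_I,\r R_I,M_I)$. Applying the functor $\digamma$ and then $\chi$, and invoking Lemma \ref{modmorlma} together with the modal counterparts of Lemma \ref{duallma}, we obtain that $\chi(\digamma(A))$ is a homomorphic image of a subalgebra of $\chi(\digamma(\chi_{\r R_I}(X_I)))$, as captured by the following diagram.

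\vspace{0.5em}
\adjustbox{width=0.8\textwidth,center}{
\begin{tikzcd}
	B && {\chi_{\r R_I}(X_I)} && {\chi(\digamma(B))} && {\chi(\digamma(\chi_{\r R_I}(X_I)))} \\
	\\
	A &&&& {\chi(\digamma(A))}
	\arrow[hook, from=1-1, to=1-3]
	\arrow[two heads, from=1-1, to=3-1]
	\arrow[hook, from=1-5, to=1-7]
	\arrow[two heads, from=1-5, to=3-5]
\end{tikzcd}}
\vspace{0.5em}

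Since $\mathfrak{K}$ is closed under coproducts and filter extensions, $\digamma(\chi_{\r R_I}(X_I)) \in \mathfrak{K}$, so $L$ is valid on its positive algebra, hence also on the homomorphic image $\chi(\digamma(A))$ of its subalgebra. By Lemma \ref{modcanlma}, $\chi(\digamma(A))$ is the $\pi$-canonical extension of $A$, which shows $L$ is canonical.

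The only substantive point beyond what was already handled for Corollary \ref{cancor} is verifying that the modal versions of Lemma \ref{prodlma} and Lemma \ref{duallma} go through in the $AUFM$ setting; this is already taken care of by the discussion preceding the corollary (coproducts of $AUFM$ frames have positive algebras isomorphic to products) together with Lemma \ref{modmorlma}, which ensures that the canonical $f$-morphism dual to a fundamental modal homomorphism is an $AUFM$-morphism. Once these ingredients are in place, the proof is essentially a transcription of the argument for Corollary \ref{cancor}.
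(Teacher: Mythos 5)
Your proposal is correct and is exactly the argument the paper intends: the paper omits the proof with the remark that it ``completely mirrors the non-modal case,'' and your write-up is precisely that transcription of the proof of Corollary \ref{cancor}, substituting Theorem \ref{modalthm} for Theorem \ref{mainthm}, Lemma \ref{modcanlma} for Lemma \ref{canlma}, and using Lemma \ref{modmorlma} together with the modal analogues of Lemmas \ref{duallma} and \ref{prodlma}. No gaps.
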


The proofs of these two results completely mirror the non-modal case, and are therefore omitted.

\section*{Acknowledgements}

I thank Wes Holliday for helpful discussions on the topic, as well as three anonymous referees for their comments which helped improve the clarity of the paper.

%% Bibliography
%% Make sure to use the bibliographystyle aiml22.
\bibliographystyle{aiml}
\bibliography{aiml}

\end{document}